\theoremstyle{plain}
\newtheorem{theorem}{Theorem}[section]
\newtheorem{prop}[theorem]{Proposition}
\newtheorem{lem}[theorem]{Lemma}
\newtheorem{coro}[theorem]{Corollary}
\theoremstyle{definition}
\newtheorem{definition}[theorem]{Definition}
\newtheorem{remark}[theorem]{Remark}
\newtheorem{question}[theorem]{Question}
\newtheorem{example}[theorem]{Example}
\newcommand{\Z}{{\mathbb Z}}
\newcommand{\R}{{\mathbb R}}
\newcommand{\N}{{\mathbb N}}
\newcommand{\C}{{\mathbb C}}
\newcommand{\mc}{\mathcal}
\newcommand{\A}{{\mc A}}
\newcommand{\ii}{{\mathrm{i}}}
\newcommand{\dd}{{\mathrm{d}}}
\newcommand{\ee}{{\mathrm{e}}}
\newcommand{\sub}{\varrho}
\newcommand{\csub}{\sub^{ }_{\textnormal{\textsf{cyc}}}}
\newcommand{\bbo}{\mathds{1}} % "bbo" for "blackboard One"
\newcommand{\Id}{\mathrm{Id}}
\newcommand{\charn}{\chi^{ }_n}
\newcommand{\exend}{\hfill $\Diamond$}
\begin{document}

\title{Spectral Properties of Substitutions on Compact Alphabets}

\author{Neil Ma\~nibo}
\address{Fakult\"at f\"ur Mathematik, Universit\"at Bielefeld, \newline
\hspace*{\parindent}Postfach 100131, 33501 Bielefeld, Germany}
\email{cmanibo@math.uni-bielefeld.de }

\author{Dan Rust}
\address{School of Mathematics and Statistics, The Open University, \newline
 \hspace*{\parindent}Walton Hall, Milton Keynes, MK7
6AA, UK
}
\email{dan.rust@open.ac.uk}

\author{James J. Walton}
\address{Mathematical Sciences Building, University of Nottingham, \newline
\hspace*{\parindent}University Park, Nottingham, NG7 2RD, UK}
\email{James.Walton@nottingham.ac.uk}

\keywords{substitutions, compact alphabets, diffraction, dynamical spectrum, Delone sets}
\subjclass[2010]{37B10, 37A30, 52C23, 42A16}

\begin{abstract}
We consider substitutions on compact alphabets and provide sufficient conditions for the diffraction to be pure point, absolutely continuous and singular continuous. This allows one to construct examples for which the Koopman operator on the associated function space has specific spectral components. For abelian bijective substitutions, we provide a dichotomy result regarding the spectral type of the diffraction. 
We also provide the first example of a substitution that has countably infinite Lebesgue spectral components and countably infinite singular continuous components. Lastly, we give a non-constant length substitution on a countably infinite alphabet that gives rise to substitutive Delone sets of infinite type. This extends the spectral theory of substitutions on finite alphabets and  Delone sets of finite type with inflation symmetry.
\end{abstract}

\maketitle

\section{Introduction}
Substitutions over infinite alphabets have already been considered in a variety of contexts \cite{F:infinite, DOP:self-induced, Queffelec, RY:profinite}, but very few accounts on their spectral theory are available. One difficulty is that classical methods such as Perron--Frobenius theory are no longer immediately available in the infinite alphabet setting, and so new techniques must be developed \cite{MRW:compact}. Even then, it is possible that the maximal spectral type of the underlying system is generated by an infinite family of functions, which is usually the case.

In this work, we consider continuous substitutions on a compact alphabet and develop the spectral theory for certain families of representative examples, which are infinite alphabet generalisations of the Thue--Morse, period doubling and Rudin--Shapiro substitutions. We demonstrate how similar arguments to those implemented in the finite alphabet setting allow one to completely determine their spectral type and, at the same time, show that new phenomena can occur.

It is well known that there is a correspondence between the diffraction spectrum and dynamical spectrum. Indeed, this has been rigorously established for systems with pure point spectrum in \cite{LMS:pp,BaakeLenz}, for systems with mixed spectrum in the finite local complexity (FLC) setting 
in \cite{BLvE} and for general dynamical systems in \cite{L:sampling}. We employ a diffraction-based approach and study the diffraction of weighted point sets that arise from elements of the subshift. This entails choosing 
an appropriate set of weight functions, which we get for free when the alphabet has an extra group structure. In these cases, the characters 
of the associated group provide complete access to the admissible spectral types. 

We establish sufficient conditions for when the corresponding diffraction measure is pure point and purely singular continuous for bijective, abelian substitutions. These may be considered as compact alphabet generalisations of the Thue--Morse substitution, see Theorem~\ref{thm: cyclic periodic} for equivalent conditions for singular continuity. These spectral results allow one to distinguish bijective abelian substitutions that admit periodic factors. These results generalise the ones in \cite{BG-squiral, Bartlett, Frank-HD}. In the finite alphabet case, it is conjectured that bijective substitutions always have singular spectrum. This has been confirmed for substitutions on a binary alphabet \cite{BG-squiral} and for abelian substitutions \cite{Bartlett} but remains open for the non-abelian case; see \cite{Queffelec,Bartlett}.

We also consider the compact alphabet analogues of the period doubling substitution and show that they have pure point spectrum (both in the dynamical and diffraction sense), which is done in Theorem~\ref{thm: pure point coincidence}, extending a famous result due to Dekking \cite{Dekking}.
The subshifts they generate can be seen as generalisations of Toeplitz shifts over compact alphabets.

To complete the picture, we provide an analogue of the Rudin--Shapiro substitution on a compact alphabet. The alphabet is $\mathcal{A}=S^{1}\times \left\{\mathsf{0},\mathsf{1}\right\}$, and the corresponding dynamical system is a skew product, which is a group extension of the odometer $(\Z_{2},+1)$ by $S^1$; compare \cite{FM:spin, AL:chain, H:cocycle}. 
Its spectral theory is completely determined by the defining spin matrix $W$. From $W$, one can explicitly define the cocycle that induces the $\Z$-action on $\Z_2\times S^{1}$. 
Using the results in \cite{FM:spin}, we prove in Theorem~\ref{thm:spin-dynam} that this example has countably infinite Lebesgue components and countably infinite singular continuous components in its dynamical spectrum. To the authors' knowledge, this is the first substitutive example with such a spectral decomposition; obviously one cannot have this for finite alphabets where the maximal spectral type is generated by only finitely many correlation measures \cite{Bartlett,Queffelec}.  

All of the previously mentioned families are constant-length, and so one can say that, for all of them, infinite local complexity arises in a purely combinatorial fashion. We end with an example of a non-constant length substitution with uniquely ergodic subshift on the alphabet $\mathbb{N}_0 \cup \{\infty\}$. It admits natural tile lengths that are bounded from above and are bounded away from zero \cite{MRW:compact}. This allows one to construct a tiling of $\mathbb{R}$ with infinitely many prototile lengths. 
Here, infinite local complexity also manifests geometrically apart from the combinatorial complexity induced by the alphabet. From this tiling, one can derive a Delone set of infinite type that still has an inflation symmetry in the sense of Lagarias \cite{L:finite}.

\section{Substitutions on compact alphabets}

\subsection{General theory}
Here, we present general notions and results regarding substitutions on compact alphabets. Although this section is self-contained, we refer the reader to \cite{MRW:compact}
for a more detailed treatment and for the proofs of the dynamical results. 

Let $\A$ be a compact Hausdorff space which we call an \emph{alphabet} and whose elements we call \emph{letters}.
Let $\A^+ = \bigsqcup_{n \geqslant 1} \A^n$ denote the set of all finite (non-empty) \emph{words} over the alphabet $\A$, where $\A^n$ has the product topology and $\A^+$ has the topology of a disjoint union.
Let $\A^\ast = \A^+ \sqcup \{\varepsilon\}$, where $\varepsilon$ is the \emph{empty word}.
\emph{Concatenation} is a binary operation $\A^\ast \times \A^\ast \to \A^\ast$ given by $(\varepsilon, u) \mapsto u$, $(u, \varepsilon)\mapsto u$ and $
(u_1 \cdots u_n, v_1 \cdots v_m) \mapsto u_1 \cdots u_n v_1 \cdots v_n$,
where $u = u_1 \cdots u_n \in \A^n$ and $v = v_1 \cdots v_m \in \A^m$.
We write $uv$ for the concatenation of $u$ and $v$.
Note that concatenation is a continuous operation.
Let $\A^{\Z}$ denote the set of bi-infinite sequences over $\A$ with the product topology.
We use a vertical line $|$ to denote the position between the $-1$st and $0$th element of a bi-infinite sequence, and so we write $w = \cdots w_{-2} w_{-1} | w_0 w_1 \cdots$. The space $\A^{\Z}$ is compact by Tychonoff's theorem. The left shift $\sigma\colon \A^{\Z}\to \A^{\Z}$ is defined by $\sigma(w)_{n}=w_{n+1}$.  A subshift $X\subseteq {\A}^{\Z}$ is a subspace of the full shift that is closed and $\sigma$-invariant.

\begin{definition}
Let $\sub \colon \A \to \A^+$ be a continuous function. We call such a function a \emph{substitution} on $\A$. We say $\sub$ is a substitution of \emph{constant length} $n$ if $\sub(\A)\subset \A^n$.
\end{definition}

To avoid trivialities, we always assume that \(|\sub(a)| \geq 2\) for at least one letter \(a \in \A\). If $\A$ is connected, then by continuity, $\sub$ is necessarily of constant length.

\begin{definition}
Let $\sub\colon \A \to \A^+$ be a substitution. We say a word $u \in \A^n$ is \emph{generated} by $\sub$ if there exist $a \in \A$, $k \geqslant 1$ such that $\sub^k(a)$ contains $u$ as a subword. We write
\[
\mc L^n(\sub) := \overline{\{u \in \A^n \mid u \text{ is generated by } \sub\}}
\]
and define $\mc L(\sub): = \bigsqcup \mc L^n(\sub)$. We call $\mc L(\sub)$ the \emph{language} of $\sub$ and call words in $\mc L(\sub)$ \emph{legal}.
We define a closed, shift-invariant subspace of $\A^\Z$ by
\[
X_\sub := \left\{w \in \A^\Z \mid \forall i \leqslant j, \: w_{[i,j]} \in \mc L(\sub)\right\}
\]
and call $X_\sub$ the \emph{subshift} associated with $\sub$ \cite{MRW:compact}.
\end{definition}

We note that, in the previous definition, we have to take the closure to form ${\mc L}^{n}(\sub)$, which is automatically satisfied for finite alphabets. Next, we present the generalisation of the notion of a primitive substitution in the infinite alphabet setting. The following definition is equivalent to that in the work of Durand, Ormes and Petite \cite{DOP:self-induced}; see also Frank and Sadun \cite{PFS:fusion-ILC} and Queff\'{e}lec \cite{Queffelec}.
Let $\sub \colon \A \to \A^+$ be a substitution.
We say $\sub$ is \emph{primitive} if, for every non-empty open set $U \subset \A$, there exists a $p=p(U) \geqslant 0$ such that for all $a \in \A$, some letter of $\sub^{p}(a)$ is in $U$.
A substitution $\sub$ is called \emph{irreducible} if it cannot be restricted to a strictly smaller closed non-empty subalphabet of $\A$. Primitivity implies irreducibility \cite{MRW:compact}, which is consistent with results in the finite-dimensional setting.

Let $E=C(\A)$ be the Banach space of real-valued continuous functions over $\A$ equipped with the sup-norm. The analogue of the transpose of the substitution matrix in the finite alphabet setting is the operator $M\colon E\to E$ given by $Mf(a)=\sum_{b\triangleleft\sub(a)}f(b)$, where the indexing $b\triangleleft\sub(a)$ is taken to include multiplicities. 
Note that $M$ is a bounded and positive operator on $E$, but in the most interesting cases it is a non-compact operator; see \cite{MRW:compact}.

Nevertheless weaker properties give rise to similar spectral consequences. An operator $T\colon E\to E$ with spectral radius $r(T)=1$ is called 
\begin{enumerate}
\item \emph{quasi-compact} if there is a compact operator $K$ and a natural number $n$ such that $\|T^n-K\|<1$;
\item \emph{strongly power convergent} if \(T^n f\) converges for all \(f \in E\) with respect to the norm on \(E\) (i.e., converges uniformly to a continuous function on \(\A\)). Equivalently, \(T^n \to P\) to some bounded operator \(P\) in the strong operator topology;
\item \emph{mean ergodic} if the Ces\`{a}ro average \(A_n(f) \coloneqq \frac{1}{n}\sum_{j=0}^{n-1} T^jf\) converges for each \(f \in E\). Equivalently, \(A_n  \to P\) to some bounded operator \(P\) in the strong operator topology.
\end{enumerate} 
Primitivity and quasi-compactness imply strong power convergence, and strong power convergence implies mean ergodicity. A function $0\neq \ell\in C(\A)$ is called a \emph{natural length function} if $\ell\geqslant 0$, and $M\ell(a)=\lambda\ell(a)$ for each $a\in\A$ where $\lambda\geqslant 0$. We note that necessarily \(\lambda = r > 1\) when \(\sub\) is irreducible \cite[Thm.~4.26, Prop.~4.27]{MRW:compact}.

\begin{theorem}[{\cite[Thm~3.30]{MRW:compact}}]\label{thm:minimal}
If $\sub$ is a primitive substitution on a compact Hausdorff alphabet $\A$, then $(X_\sub,\sigma)$ is minimal.
\end{theorem}

\begin{theorem}[{\cite[Prop.~5.8]{MRW:compact}}]\label{thm:tile-lengths}
 Let $\sub$ be an irreducible  substitution on a compact Hausdorff alphabet $\A$. If the substitution operator $M$ associated with $\sub$ is mean ergodic, $\sub$ admits a unique (up to scaling) natural length function $\ell\in E$ with $\lambda=r(M)>1$. Moreover, $\ell$ is strictly positive.
\end{theorem}

Theorem~\ref{thm:tile-lengths} is the compact alphabet analogue for the existence of a left Perron--Frobenius (PF) eigenvector of the usual substitution matrix. Note that when $\sub$ is constant-length, it automatically admits the natural length function $\ell = \bbo$, where \(\bbo(a) = 1\) for all \(a \in \A\), regardless of compactness properties of $M$. The existence of a unique positive right PF eigenvector associated with $r$ requires one to look at the dual operator $M'\colon E' \to E'$, where $E'$ is the Banach dual of $E$. Here, $M'$ is defined via $M'\mu(f)=\mu(Mf)$ for $\mu\in E'$. One has the following sufficient criteria for the subshift $X_\sub$ to be uniquely ergodic. 

\begin{theorem}[{\cite[Thm.~5.14 Cor.~5.15]{MRW:compact}}]\label{thm:unique-erg}
Let $\sub$ be a substitution on a compact Hausdorff alphabet $\mc{A}$.
Suppose $\sub$ is irreducible and the substitution operator $M$ is strongly power convergent. Then, $(X_\sub,\sigma)$ is uniquely ergodic. \qed
\end{theorem}

\begin{theorem}[{\cite[Thm.~6.17]{MRW:compact}}]\label{thm:unique-erg-CL}
Let $\A$ be a compact Hausdorff alphabet and $\sub$ a substitution over $\A$ of constant length $L$. Suppose $\sub$ is primitive and that the columns $\left\{\sub^{ }_{j}\right\}_{0\leqslant j\leqslant L-1}$ of $\sub$ generate an equicontinuous semigroup. Then, $(X_\sub,\sigma)$ is uniquely ergodic. \qed
\end{theorem}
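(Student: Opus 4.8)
The plan is to reduce unique ergodicity to a convergence property of a single Markov operator on $C(\A)$, with the equicontinuity hypothesis playing the role that Perron--Frobenius theory plays on finite alphabets. Write $L$ for the length, so $\varrho(a)=\varrho_0(a)\cdots\varrho_{L-1}(a)$ with continuous columns $\varrho_i\colon\A\to\A$, and the operator of the excerpt is $Mf(a)=\sum_{i=0}^{L-1}f(\varrho_i(a))$. Since $M\bbo=L\,\bbo$, the natural length function of Theorem~\ref{thm:tile-lengths} is constant and $\lambda=L$, so $P:=M/L$ is a \emph{Markov operator}: positive, $P\bbo=\bbo$, $\|P\|=1$. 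By Oxtoby's criterion it suffices to show that for every $f\in C(\A)$ the averages $\tfrac1N\sum_{n=0}^{N-1}f(w_n)$ converge uniformly in $w\in X_\varrho$ to a constant; a multi-coordinate $F\in C(X_\varrho)$ reduces to this one-coordinate case by Stone--Weierstrass after passing to the induced length-$L$ substitution on the compact alphabet $\mc L^m(\varrho)$ of legal length-$m$ words, which inherits constant length, primitivity and equicontinuity. I therefore treat $F(w)=f(w_0)$.

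The structural input is self-similarity: writing $0\le n<L^k$ in base $L$ as $n=\sum_{j=0}^{k-1}d_jL^j$, the position-$n$ letter of the supertile $\varrho^k(a)$ is $\varrho_{d_0}\circ\cdots\circ\varrho_{d_{k-1}}(a)$, so averaging $f$ over a complete level-$k$ supertile gives
\[
\frac{1}{L^k}\sum_{n=0}^{L^k-1}f\bigl(\varrho^k(a)_n\bigr)=(P^kf)(a)=\int_{\A}f\,\dd\mu^{(k)}_a,\qquad \mu^{(k)}_a:=\frac{1}{L^k}\sum_{b\in\varrho^k(a)}\delta_b .
\]
Thus all supertile averages are controlled by the iterates $P^kf$, which I would study through their oscillation $\operatorname{osc}(g):=\max g-\min g$. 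As $P$ averages, $\max P^kf$ is non-increasing to some $\overline m$ and $\min P^kf$ non-decreasing to some $\underline m$, so $\operatorname{osc}(P^kf)\downarrow\overline m-\underline m=:\beta$; the whole game is to prove $\beta=0$, for then $P^kf\to c_f\,\bbo$ uniformly with $c_f=\overline m=\underline m$.

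To prove $\beta=0$ I would argue by contradiction, and this is where equicontinuity is indispensable. Because $P^kf$ is an average of functions $f\circ g$ over depth-$k$ elements $g$ of the column semigroup, uniform equicontinuity of that semigroup makes $\{P^kf\}_{k\ge0}$ uniformly bounded and uniformly equicontinuous, hence relatively compact in $C(\A)$ by Arzel\`a--Ascoli; I emphasise that no such compactness is otherwise available, since $M$ need not be compact and the atomic measures $\mu^{(k)}_a$ have generically disjoint supports, so naive contraction estimates fail. Choosing a uniformly convergent subsequence $P^{k_j}f\to h\in C(\A)$ and applying the continuous maps $P^i$, one gets $P^ih=\lim_jP^{k_j+i}f$ with $\max P^ih=\overline m$ for every $i\ge0$. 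Set $F:=\{h=\overline m\}$, a closed set that is proper when $\beta>0$. Since $P^ih(a)=\int h\,\dd\mu^{(i)}_a\le\overline m$ with equality exactly when every depth-$i$ descendant of $a$ lies in $F$, the nonempty set $\{P^ih=\overline m\}$ consists precisely of letters all of whose depth-$i$ descendants lie in $F$. Applying primitivity to the nonempty open set $\A\setminus F$ yields a $p$ for which \emph{every} letter has a depth-$p$ descendant outside $F$; this contradicts nonemptiness of $\{P^ph=\overline m\}$. Hence $\beta=0$. I expect this precompactness-and-propagation step to be the main obstacle: the delicate point is precisely that equicontinuity must supply the continuous cluster point $h$ that finite-alphabet Perron--Frobenius would otherwise hand us for free.

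It remains to upgrade the supertile identity to arbitrary windows and to conclude. An invariant probability measure $\nu$ on $\A$ exists by Krylov--Bogolyubov on the compact space $\A$, and minimality (Theorem~\ref{thm:tile-lengths}) gives an invariant measure on $X_\varrho$; any invariant $\nu'$ satisfies $\int f\,\dd\nu'=\int P^kf\,\dd\nu'\to c_f$, forcing $c_f=\int f\,\dd\nu$ and $\nu'=\nu$, so both are unique. For the Birkhoff averages, the greedy base-$L$ decomposition writes any window $[0,N)$ as a disjoint union of complete supertiles of levels $j\le J=\lfloor\log_LN\rfloor$ together with a boundary remainder of length $O(L^{j_0})$ for each chosen threshold $j_0$; the complete level-$j$ supertiles contribute averages $P^jf$, which lie within $\|P^jf-c_f\bbo\|_\infty\to0$ of $c_f$ for $j\ge j_0$, while the total mass of the levels below $j_0$ and of the incomplete pieces is negligible relative to $N$. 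Letting $N\to\infty$ and then $j_0\to\infty$ gives $\tfrac1N\sum_{n=0}^{N-1}f(w_n)\to c_f$ uniformly in $w$ and in the window position, which is the claimed unique ergodicity.
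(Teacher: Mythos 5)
The paper does not actually prove this theorem: it is imported wholesale from Queff\'{e}lec \cite{Queffelec}, with only the citation standing in for a proof, so your argument can only be judged on its own merits rather than against an in-paper argument. On those merits, your core mechanism is correct and is, in my view, a genuine proof of the key analytic step. The reduction of unique ergodicity (via Oxtoby's criterion) to uniform convergence of $P^k f$ to a constant, and the derivation of that convergence from (a) monotonicity of $\max P^k f$ and $\min P^k f$, (b) precompactness of $\{P^k f\}_{k \geqslant 0}$ in $C(\A)$ via equicontinuity of the column semigroup together with Arzel\`{a}--Ascoli (this is exactly where the hypothesis replaces Perron--Frobenius, and where metrisability of $\A$ is used), and (c) the contradiction obtained by applying the paper's definition of primitivity to the nonempty open set $\A \setminus F$, where $F = \{h = \overline{m}\}$ is closed and proper when $\beta > 0$, all check out. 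In particular, the identity $\max P^i h = \overline{m}$ for all $i$ follows correctly from continuity of $P^i$ in the sup-norm and the monotone convergence of $\max P^k f$, and the characterisation of $\{P^i h = \overline{m}\}$ as the set of letters all of whose depth-$i$ descendants lie in $F$ is exactly right.

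There are, however, two places where you assert structural facts that are genuinely nontrivial for compact alphabets and which your proof needs. The substantive one is the supertile decomposition of an \emph{arbitrary} $w \in X_\varrho$: your final paragraph silently uses that for every $k$ one can write $w = \sigma^r \varrho^k(x)$ with $0 \leqslant r < L^k$ and $x \in X_\varrho$, i.e.\ $X_\varrho = \bigcup_{r=0}^{L^k-1} \sigma^r \varrho^k(X_\varrho)$; without this, the ``greedy base-$L$ decomposition'' of a window of $w$ into complete supertiles, whose averages are the values $P^j f(x_i)$ your estimate requires, simply does not exist. For finite alphabets this is classical, but here the language is defined with a closure (the paper stresses that $\mc{L}^n(\sub)$ is the \emph{closure} of the admitted words), so legal words need not occur in any $\varrho^k(a)$, and the fact requires a compactness/diagonal argument; it is true, and is part of the structure theory of \cite{MRW:compact}, but it must be invoked or proved rather than assumed. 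The second, more routine, gap is the block-coding reduction: that the induced substitution on the compact alphabet $\mc{L}^m(\varrho)$ is again primitive and has equicontinuous column semigroup is asserted without argument (both do transfer, with short proofs, and one can even avoid identifying the block subshift by comparing supertile averages of $F$ with the block operator directly up to $O(m/L^k)$ boundary terms). Finally, the sentence claiming that \emph{any} invariant measure $\nu'$ satisfies $\int f \, \dd\nu' = \int P^k f \, \dd\nu'$ is unjustified as stated --- the letter marginal of an arbitrary shift-invariant measure is not obviously $P^*$-invariant --- but this is harmless, since that paragraph is redundant: Oxtoby's criterion already converts your uniform Birkhoff convergence into unique ergodicity.
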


\subsection{Compactly bijective substitutions}\label{SEC:bijective}
One of our main results concerns substitutions of constant length for which the columns generate a compact abelian group. In this section, we show that such substitutions must take a particular form. Namely, the alphabet can be given a group structure with respect to which the columns act as group translations. The following definition is a natural extension of the corresponding notions from the finite alphabet case.

\begin{definition}
Given a substitution $\sub \colon \A \to \A^+$ of constant length $L$, we say that $\sub$ is \emph{bijective} if each \emph{column} $\sub^{ }_j(a) \coloneqq (\sub(a))_j$, for $0 \leqslant j \leqslant L-1$, is a bijection, and thus by continuity of $\sub$ a homeomorphism of $\mc{A}$. Let $\hom(\A)$ denote the set of homeomorphisms of $\A$ equipped with the compact-open topology. We let $\mc{C}$ denote the subgroup of $\hom(\A)$ generated by the columns of $\sub$. The closure of $\mc{C}$ in $\hom(\mc{A})$ is denoted $\mc{G} \coloneqq \overline{\mc{C}}$. We call $\sub$ \emph{compactly bijective} if $\mc{G}$ is compact. A compactly bijective substitution is called \emph{abelian} if $\mc{G}$ is abelian.
\end{definition}

\begin{remark}
In the finite alphabet case, a bijective substitution is abelian if and only if the group $\mathcal{C}$ generated by the columns is abelian. \exend
\end{remark}

\begin{remark}
%If $\mc{A}$ is infinite, 
In the general case, $\sub$ is abelian if and only if the columns commute and generate a relatively compact subgroup in $\hom(\A)$. Indeed, the algebraic closure of a commuting subset is abelian, as is the topological closure of an abelian subgroup.
%, and $\mc{C} \subset K$ for some compact subset $K \subset \hom(\A)$.
\exend
\end{remark}

\begin{lem} \label{lem: transitive action}
Suppose that $\sub$ is primitive and compactly bijective. Then $\mc{G}$ acts transitively on $\A$.
\end{lem}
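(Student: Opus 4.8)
The plan is to prove transitivity by showing that for a \emph{single} fixed letter $a \in \A$ the whole orbit $\mc{G}\cdot a$ is all of $\A$; transitivity then follows, since for any $b \in \A$ we obtain $g \in \mc{G}$ with $g(a)=b$. The mechanism is that the substitution structure forces every letter occurring in an iterate $\sub^k(a)$ to lie in the orbit $\mc{C}\cdot a$ of the group generated by the columns, after which primitivity promotes this orbit to a dense subset and compactness of $\mc{G}$ closes it up.

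First I would record the elementary combinatorial fact that every letter appearing in $\sub^k(a)$ belongs to $\mc{C}\cdot a$. By definition each letter of $\sub(b)$ has the form $\sub^{ }_j(b)$ for some column $\sub^{ }_j \in \mc{C}$; iterating, any letter of $\sub^k(a)$ is obtained by applying a length-$k$ composition of columns to $a$, and such a composition lies in $\mc{C}$. This is a one-line induction on $k$.

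Next I would use primitivity to conclude that $\mc{C}\cdot a$ is dense in $\A$. Given any nonempty open $U \subseteq \A$, primitivity supplies $p = p(U)$ such that, for our fixed $a$, some letter of $\sub^p(a)$ lies in $U$; by the previous step that letter lies in $\mc{C}\cdot a$, so $\mc{C}\cdot a \cap U \neq \varnothing$. As $U$ is an arbitrary nonempty open set, $\mc{C}\cdot a$ meets every such set and is therefore dense. Finally I would invoke the ``compactly bijective'' hypothesis: since $\A$ is compact Hausdorff, hence locally compact, evaluation $\hom(\A)\times\A \to \A$ is continuous for the compact-open topology, so the orbit map $g \mapsto g(a)$ is continuous on $\mc{G}$. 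As $\mc{G}$ is compact, its image $\mc{G}\cdot a$ is compact and thus closed in the Hausdorff space $\A$. Combining the last two steps, $\A = \overline{\mc{C}\cdot a} \subseteq \overline{\mc{G}\cdot a} = \mc{G}\cdot a$, so $\mc{G}\cdot a = \A$. Since $a$ was arbitrary, $\mc{G}$ acts transitively.

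I expect the only real subtlety to be this last step, namely the verification that $\mc{G}$-orbits are closed: this is precisely where the compactness of $\mc{G}$ enters, together with the continuity of evaluation in the compact-open topology, which needs the local compactness of $\A$. The first two steps are bookkeeping with the definitions of the columns and of primitivity, and do not require the closure or the group structure of $\mc{G}$ at all.
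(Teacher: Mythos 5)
Your proof is correct, and it shares the paper's skeleton: letters of $\sub^k(a)$ are images of $a$ under compositions of columns (so lie in $\mc{C}\cdot a$), primitivity places such letters in every nonempty open set, and compactness of $\mc{G}$ finishes the argument. Where you genuinely differ is in how the compactness is deployed. The paper fixes a target letter $b$, builds a net $(f_U)$ in $\mc{C}$ indexed by the open neighbourhoods $U$ of $b$ with $f_U(a)\in U$, and extracts a limit $f\in\mc{G}$ with $f(a)=b$; strictly speaking, compactness only yields a convergent \emph{subnet}, so the paper's phrase ``by compactness, $f_U \to f$'' hides a (harmless) pass to a subnet. Your version replaces the net argument with a static one: the orbit map $g\mapsto g(a)$ is continuous, so $\mc{G}\cdot a$ is compact, hence closed in the Hausdorff space $\A$, and therefore contains $\overline{\mc{C}\cdot a}=\A$. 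This buys two small simplifications: no nets or subnets at all, and you only need continuity of evaluation at a single fixed point, which holds for the compact-open topology on $C(\A,\A)$ with no local compactness hypothesis (your appeal to joint continuity of $\hom(\A)\times\A\to\A$ is correct but stronger than necessary, since the subbasic set $\{h : h(a)\in V\}$ is already compact-open). One cosmetic remark: because $\mc{G}$ is a group, orbits partition $\A$, so a single full orbit $\mc{G}\cdot a=\A$ already gives transitivity; your closing ``since $a$ was arbitrary'' is not needed, though it is harmless.
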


\begin{proof}
Let $a, b \in \A$. We wish to find some $g \in \mc{G}$ with $g(a) = b$. Let $U \subset \A$ be an open set containing $b$. By primitivity, there is some $N \in \N$ so that $\sub^N(a)$ contains a letter of $U$. Equivalently, we have that $(\sub^{ }_{i_0} \circ \cdots \circ \sub^{ }_{i_{N-1}})(a) \in U$ where each $\sub^{ }_{i}$ is a column of $\sub$. It follows that we may construct a map $f_U \in \mc{C}$ so that $f_U(a) \in U$.

The open sets $U$ containing $b$ form a directed set, and so we have a net $(f_U)$. By compactness, we have that $f_U \to f$ for some $f \in \mc{G}$. Since $(f_U) \to f$ with respect to the compact-open topology it certainly converges pointwise, so in particular $(f_U(a))_U \to f(a)$. Then clearly $f(a) = b$, since $f_U(a) \in U$, where $U$ is an arbitrary open set containing $b$.
\end{proof}

\begin{prop} \label{prop: cpt bij => coset sub}
Suppose that $\sub$ is primitive and compactly bijective. Then $\mc{A} \cong \mc{G}/H$ for some compact subgroup $H \leqslant \mc{G}$. With respect to this identification, we have
\[
\sub [g] = (\sub^{ }_0, \sub^{ }_1, \ldots, \sub^{ }_{L-1}) [g] = [\sub^{ }_0 \cdot g][\sub^{ }_1 \cdot g] \cdots [\sub^{ }_{L-1} \cdot g],
\]
where $[g]$ denotes the coset of $g \in \mc{G}$ in $\mc{G}/H$.
\end{prop}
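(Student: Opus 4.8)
The plan is to recognise this statement as the standard identification of a transitive $\mc{G}$-space with a homogeneous space $\mc{G}/H$, and then to read off the substitution formula by a direct computation using a chosen base point.

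First I would fix a base letter $a_0 \in \A$ and set $H := \{g \in \mc{G} \mid g(a_0) = a_0\}$, the stabiliser of $a_0$ in $\mc{G}$. Since $\A$ is compact Hausdorff it is locally compact, so the evaluation map $\mc{G} \to \A$, $g \mapsto g(a_0)$, is continuous for the compact-open topology; consequently $H$, being the preimage of the closed set $\{a_0\}$, is a closed subgroup of the compact group $\mc{G}$, and hence is itself compact. I would then consider the orbit map $\Phi \colon \mc{G}/H \to \A$ induced by $g \mapsto g(a_0)$. This is well defined since $H$ fixes $a_0$ (so the value is constant on each left coset $gH$), injective because $g(a_0) = g'(a_0)$ forces $g^{-1}g' \in H$, and surjective by the transitivity established in Lemma~\ref{lem: transitive action}. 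As $\mc{G} \to \mc{G}/H$ is a quotient map and $g \mapsto g(a_0)$ is continuous, $\Phi$ is continuous; and since $\mc{G}/H$ is compact (a continuous image of $\mc{G}$) while $\A$ is Hausdorff, $\Phi$ is a continuous bijection from a compact space to a Hausdorff space, hence a homeomorphism. This gives the identification $\A \cong \mc{G}/H$, with $g(a_0) \leftrightarrow [g]$. I would also note that $\Phi$ is $\mc{G}$-equivariant for left multiplication, since $g'(g(a_0)) = (g'g)(a_0)$ corresponds to $[g'g]$.

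With the identification in hand, the substitution formula is immediate. If $a \in \A$ corresponds to $[g]$, i.e.\ $a = g(a_0)$, then for each column $\sub_j$ we have $\sub_j(a) = \sub_j(g(a_0)) = (\sub_j \circ g)(a_0)$, which corresponds to the coset $[\sub_j \cdot g]$; here $\sub_j \cdot g \in \mc{G}$ because every column lies in $\mc{C} \subseteq \mc{G}$. Reading $\sub(a) = \sub_0(a)\,\sub_1(a)\cdots\sub_{L-1}(a)$ through $\Phi^{-1}$ then yields exactly $\sub[g] = [\sub_0 \cdot g][\sub_1 \cdot g]\cdots[\sub_{L-1}\cdot g]$. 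Well-definedness on cosets is automatic: replacing $g$ by $gh$ with $h \in H$ changes $\sub_j \cdot g$ to $\sub_j \cdot g \cdot h$, leaving the coset $[\sub_j \cdot g]$ unchanged.

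I expect the only genuinely delicate points to be topological rather than algebraic, namely verifying that the evaluation action $\mc{G} \times \A \to \A$ is continuous and that $\mc{G}$ is a topological group under the compact-open topology, both of which rely on the local compactness of the compact Hausdorff alphabet $\A$. Once continuity of evaluation is granted, the rest is the routine ``continuous bijection from compact to Hausdorff'' argument, and the substitution formula drops out of the equivariance of $\Phi$.
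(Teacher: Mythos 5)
Your proposal is correct and follows essentially the same route as the paper's own proof: the stabiliser subgroup $H=\mathrm{stab}(a_0)$ (closed, hence compact), the orbit map $\mc{G}/H\to\A$ made bijective by Lemma~\ref{lem: transitive action}, the continuous-bijection-from-compact-to-Hausdorff argument to get a homeomorphism, and the column-wise computation $\sub_j(g(a_0))=(\sub_j\circ g)(a_0)\leftrightarrow[\sub_j\cdot g]$ for the substitution formula. The only differences are cosmetic (you spell out injectivity, equivariance and coset well-definedness, which the paper leaves implicit).
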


\begin{proof}
The evaluation map $C(X,Y) \times X \to Y$ is always continuous for $X$ locally compact and Hausdorff and $C(X,Y)$ equipped with the compact-open topology. Hence the action $\mc{G} \times \A \to \A$ of $\mc{G}$ on $\A$, given by $g \cdot a \coloneqq g(a)$, is continuous. 
%\marginpar{do we really need this topological property of $\A$?}
Since $\mc{G}$ acts transitively on $\mc{A}$ by Lemma \ref{lem: transitive action}, we have an identification between the $\mc{G}$-actions on $\mc{A}$ and on $\mc{G}/H$ by (left) multiplication, where
\[
H \coloneqq \mathrm{stab}(p) \coloneqq \{g \in \mc{G} \mid g(p) = p\}
\]
and $p \in \A$ is an arbitrary basepoint. Stabiliser subgroups of topological group actions are always closed, so $H$ is compact.

These actions are intertwined via the bijection
\[
\phi \colon \mc{G}/H \to \A, \ \ \phi [g] \coloneqq g(p)
\]
with inverse $a \mapsto [g]$, where $g$ is such that $g(p)=a$ (uniquely defined up to coset representative).
The map $\mc{G} \to \A$ defined by $g \mapsto g(p)$ is continuous, since, as above, evaluation is continuous. Hence, this descends to a continuous map on the quotient $\mc{G}/H$, so $\phi$ is continuous. By assumption, $\A$ is Hausdorff and, since $\mc{G}$ is compact, so is $\mc{G}/H$, hence $\phi$ is a homeomorphism.

By assumption, we have that $\sub = (\sub^{ }_0, \sub^{ }_1, \ldots, \sub^{ }_{L-1})$, where each $\sub^{ }_i$ is a column and thus is an element of $\mc{G}$. Finally, writing $\sub$ as a substitution on $\mc{G}/H$ by the identification $\phi$, we have $\sub[g]_i = \sub(g(p))_i =\sub^{ }_i(g(p)) = [\sub^{ }_i\cdot g]$ and the result follows.
\end{proof}

\begin{coro}\label{coro:bijective-abelian}
Let $\sub$ be a primitive, compactly bijective, abelian substitution. Then, we may equip $\A$ with the structure of a compact group so that each column of the substitution is a group rotation. In particular, there exist $\beta_i \in \A$ so that for all $a \in \A$
\[
\sub(a) = (\beta_0 \cdot a) (\beta_1 \cdot a) \cdots (\beta_{L-1} \cdot a).
\]
\end{coro}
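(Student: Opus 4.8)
The plan is to leverage Proposition~\ref{prop: cpt bij => coset sub} directly, upgrading the homogeneous-space description of $\A$ to a genuine group description under the abelian hypothesis. By assumption $\mc{G}$ is a compact abelian group, and Proposition~\ref{prop: cpt bij => coset sub} furnishes a compact subgroup $H \leqslant \mc{G}$ together with a homeomorphism $\phi \colon \mc{G}/H \to \A$, $\phi[g] = g(p)$, intertwining the column action on $\A$ with left multiplication on $\mc{G}/H$. The first observation is that, since $\mc{G}$ is abelian, \emph{every} subgroup of $\mc{G}$ is normal; in particular $H$ is normal, so $\mc{G}/H$ carries its own compact abelian group structure rather than merely that of a coset space.

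I would then transport this group structure across $\phi$: declaring $\phi[g_1] \cdot_{\A} \phi[g_2] := \phi([g_1][g_2])$ makes $\phi$ a group isomorphism and equips $\A$ with the structure of a compact abelian group, in which the identity element is $\phi[e] = e(p) = p$, i.e.\ the chosen basepoint. Compactness and the topological group axioms are inherited from $\mc{G}/H$ via the homeomorphism $\phi$.

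Next I would identify the columns as rotations. By Proposition~\ref{prop: cpt bij => coset sub}, the column $\sub^{ }_i$ acts on $\mc{G}/H$ by $[g] \mapsto [\sub^{ }_i \cdot g] = [\sub^{ }_i][g]$, which is precisely left multiplication by the coset $[\sub^{ }_i]$. Setting $\beta_i := \phi[\sub^{ }_i] = \sub^{ }_i(p)$, the fact that $\phi$ is a group isomorphism shows that, under the transported group law, $\sub^{ }_i$ acts on $\A$ as $a \mapsto \beta_i \cdot a$, a genuine group rotation. Substituting this into the column expansion from the proposition yields $\sub(a) = (\beta_0 \cdot a)(\beta_1 \cdot a) \cdots (\beta_{L-1} \cdot a)$, as claimed.

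The content here is almost entirely packaged in the preceding proposition; the only genuinely new input is the elementary group-theoretic remark that normality of $H$ is automatic in the abelian setting, which is what promotes the homogeneous space to a quotient group. The one point deserving care is to verify that $\phi$ really does intertwine the two group laws so that the columns become honest left multiplications --- but this is immediate from the formula $\phi[g] = g(p)$ together with the definition of the transported product.
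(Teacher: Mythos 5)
Your proof is correct, but it takes a genuinely different route from the paper's. The paper's own argument shows that the stabiliser $H = \mathrm{stab}(p)$ is in fact \emph{trivial}: for $g \in H$ and arbitrary $x \in \A$, transitivity (Lemma~\ref{lem: transitive action}) gives $h \in \mc{G}$ with $h(p) = x$, and then $g(x) = (gh)(p) = (hg)(p) = h(p) = x$, so $g = \Id$; hence $\A \cong \mc{G}/H = \mc{G}$ itself and the columns are translations of $\mc{G}$. You instead bypass the triviality of $H$ entirely: abelianness makes $H$ normal, so $\mc{G}/H$ is already a compact abelian group, and you transport that group structure across $\phi$, after which the intertwining formula $[g] \mapsto [\sub^{ }_i][g]$ from Proposition~\ref{prop: cpt bij => coset sub} exhibits each column as rotation by $\beta_i = \sub^{ }_i(p)$. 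Both arguments are complete and prove the corollary as stated. The trade-off: your version is more economical and slightly more general, since it works verbatim whenever the stabiliser happens to be normal in $\mc{G}$, with no appeal to the fact that a transitive abelian action is free; but it only identifies $\A$ with the quotient $\mc{G}/H$, whereas the paper's argument extracts the sharper structural fact $\A \cong \mc{G}$ --- the alphabet \emph{is} the closed group generated by the columns --- which is the identification the surrounding text then uses. Of course, combining the two viewpoints, your $H$ is trivial after all, so the group structure you transport from $\mc{G}/H$ coincides with the one the paper puts on $\A$.
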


\begin{proof}
In applying Proposition \ref{prop: cpt bij => coset sub} we have $H = \mathrm{stab}(p)$ where $p$ is an arbitrary base point. Then $H = \{\text{id}\}$ since $\sub$ is abelian. Indeed, let $g \in \mathrm{stab}(p)$ and $x \in \A$ be arbitrary. By Lemma \ref{lem: transitive action}, the action of $\mc{G}$ on $\A$ is transitive, so there is some $h \in \mc{G}$ with $h(p) = x$. Then $g(x) = (gh)(p) = (hg)(p) = h(p) = x$ and thus $g = \Id$. It follows from Proposition \ref{prop: cpt bij => coset sub} that we may identify $\A \cong \mc{G}$ and that each column is a group rotation.
\end{proof}

It follows that a constant-length substitution is  compactly bijective and abelian if and only if we may equip $\A$ with the structure of an abelian topological group so that each column of $\sub$ is a group translation. We will assume this structure for
Section~\ref{sec:diff-bij} below. 

\begin{definition}
Let $\sub$ be a substitution on a compact alphabet $\A$ and let $X_\sub$ be its subshift.
We call $x \in X_\sub$ a \emph{pseudo-fixed point} of $\sub$ if $\sub(x)=\sigma^{s}(x)$, where $0\leqslant s\leqslant L-1$.  
\end{definition}

For finite alphabets, the existence of a bi-infinite fixed point (of some power of the substitution) is guaranteed. That is, there is always some $x\in X_\sub$ such that $\sub^{p}(x)=x$ for some power $p\in\mathbb{N}$, which is convenient for diffraction. This fails in general for infinite alphabets.
Here, we show that for our purposes, it suffices to consider pseudo-fixed points, as the associated recurrence relations are similar to those for two-sided fixed points. Pseudo-fixed points can be seen as true fixed points of the corresponding affine substitution; see \cite[Sec.~4.8.1]{BG:book}.

\begin{prop}\label{prop: pseudo-fixed point}
Let $\sub$ be a primitive constant-length substitution on a compact Hausdorff abelian group, such that all the columns are group translations. Then there exists another such substitution $\sub'$ such that  $X_{\sub'} = X_\sub$ and $\sub'$ admits a pseudo-fixed point in $X_\sub$.  
\end{prop}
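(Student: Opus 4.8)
The plan is to reduce the whole problem to manufacturing, inside the same subshift, a substitution possessing an \emph{interior identity column}, and then to produce the pseudo-fixed point as an honest fixed point of the map $F=\sigma^{-s}\circ\varrho'$ acting on $X_\varrho$. Writing the group additively, the hypotheses give $\varrho(a)=(\beta_0+a)(\beta_1+a)\cdots(\beta_{L-1}+a)$ for fixed $\beta_0,\dots,\beta_{L-1}\in\A$, and a genuine fixed point would require columns $0$ and $L-1$ to be trivial, which need not hold; the pseudo-fixed point relaxes this by allowing a shift $s$.

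The key degree of freedom I would isolate first is that translating \emph{all} columns by a single constant preserves the subshift. For any $c\in\A$ set $\varrho'(a)=(\beta_0-c+a)\cdots(\beta_{L-1}-c+a)$. Since the alphabet is abelian, a one-line base-$L$ digit computation gives, for every $p\ge 1$ and $a\in\A$,
\[
(\varrho')^{p}(a)=\varrho^{p}(a-pc),
\]
because both sides have $n$-th letter $a-pc+\sum_t\beta_{i_t}$ when $n=\sum_t i_tL^t$. As admitted words range over all seed letters and $\A$ is a group, reindexing $a\mapsto a-pc$ shows that $\varrho$ and $\varrho'$ admit exactly the same words, so $\mc L(\varrho')=\mc L(\varrho)$ and $X_{\varrho'}=X_\varrho$; the same identity shows $\varrho'$ is again primitive with translation columns $\beta'_j=\beta_j-c$, hence is ``another such substitution''. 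This is the step I expect to be the crux, since a mere power of $\varrho$ need \emph{not} produce any trivial column (e.g.\ on $S^1$ with $\beta_0=\alpha,\beta_1=2\alpha$, $\alpha$ irrational), whereas the subtraction does.

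Next I would pin down which column must be trivialised. Expanding $\varrho'(x)=\sigma^s(x)$ coordinatewise gives $x_m=\beta'_{(m-s)\bmod L}+x_{g(m)}$ with $g(m):=\lfloor(m-s)/L\rfloor$; the ``seeds'' are the fixed points of $g$, and each forces the corresponding column to vanish. The fixed points of $g$ lie in an interval of length $L/(L-1)$, so there are one or two of them: a short count shows there is a \emph{single} one, $m=-1$, exactly for $1\le s\le L-2$, while $s=0$ and $s=L-1$ each produce two seeds and hence demand two prescribed columns vanish simultaneously — something the subtraction of a single constant cannot arrange unless those columns already coincide. I would therefore first replace $\varrho$ by a power if necessary so that $L\ge 3$ (this preserves both the subshift and the translation-column structure), then apply the subtraction with $c=\beta_r$ for an interior index $r\in\{1,\dots,L-2\}$, giving $\beta'_r=0$, and set $s:=L-1-r\in\{1,\dots,L-2\}$.

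Finally I would construct the pseudo-fixed point as a fixed point of $F:=\sigma^{-s}\circ\varrho'\colon X_\varrho\to X_\varrho$ (well defined since $\varrho'$ maps $X_{\varrho'}=X_\varrho$ into itself and $\sigma$ preserves it). For $1\le s\le L-2$ one checks $-1\le g(m)<m$ for $m\ge 0$ and $m<g(m)\le -1$ for $m\le -2$, so $g$ drives every integer to its unique fixed point $-1$ in finitely many steps; crucially, the increment $F$ adds at that coordinate is $\beta'_{(-1-s)\bmod L}=\beta'_r=0$. Hence, for any $z\in X_\varrho$ and each coordinate $m$, the value $F^{j}(z)_m$ stabilises once $g^{j}(m)=-1$, so $x:=\lim_j F^{j}(z)$ exists coordinatewise, lies in the closed $F$-invariant set $X_\varrho$, and satisfies $F(x)=x$, i.e.\ $\varrho'(x)=\sigma^s(x)$. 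The contraction/stabilisation bookkeeping here, together with the observation (from the second paragraph) that a single interior trivial column is both necessary and achievable, are the only delicate points; primitivity, well-definedness of $\varrho'$ on $X_\varrho$, and closedness are routine.
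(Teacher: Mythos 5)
Your proof is correct and takes essentially the same route as the paper's: both pass to a power of $\varrho$ so that an interior column exists, translate all columns by the inverse of that column's translation (justifying $X_{\varrho'}=X_\varrho$ via the same base-$L$ digit identity between $(\varrho')^p$ and $\varrho^p$), and then obtain the pseudo-fixed point as the limit of iterates of $\sigma^{-s}\circ\varrho'$. Your analysis of the orbit of $g(m)=\lfloor (m-s)/L\rfloor$ simply makes explicit the convergence that the paper attributes to ``growing nested subwords around the origin,'' and your seed-counting remark explains why the trivial column must be interior, which the paper uses without comment.
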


\begin{proof}
We write $\sub\coloneqq \sub^{ }_0\,\sub^{ }_1\,\cdots\,\sub^{ }_{L-1}$, where $\sub_i\colon \A\to \A$ are group translations. For $m\in\mathbb{N}$ we can define $\sub'\coloneqq \sub^m \beta^{-1}_s$, where $\sub^m_s(\theta)=\theta\beta_j$ for some $0\leqslant s \leqslant L^m-1$, so that $\sub'_s=\text{id}$. We see that $\sub$ and $\sub'$ define the same subshift because they generate the same language. More explicitly, one has $\big(\sub'\big)^n(\theta\beta^n_j)=\sub^{mn}(\theta)$ for $\theta\in\A$ and $n\in\mathbb{N}$.
%Without loss of generality, we can then consider this new substitution as our $\sub$. 
For large enough $m$, we can take $1 \leqslant j \leqslant L^m -2$ so that, by construction, $\sub'$ has a fixed internal letter at the $j$th position, i.e., $\sub'(\theta)_j=\theta$. Using this, we construct a sequence of points that converges to a pseudo-fixed point. 

Let $x^{(0)}\in X$ be such that $x^{(0)}_{[-j,L-1-j]}=\sub(\theta)$ for some $\theta\in\A$. It then follows that $x^{(0)}_0=\theta$. If we substitute, we get that  $\big(\sigma^{-s}\circ\sub'(x_0)\big)_{[-j,L-1-j]}=\sub'(\theta)$. One can then consider the sequence
$\left\{x^{(n)}\right\}_{n\geqslant 0}$, with $x^{(n)}=(\sigma^{-s}\circ\sub')^{n}(x^{(0)})$. Due to the growing nested subwords around $0$, this sequence converges to a point $x\in X$, which by construction satisfies $\big(\sigma^{-s}\circ \sub'\big)(x)=x$ and so $x$ is a pseudo-fixed point for $\sub'$. 
\end{proof}
Hence for us, without loss of generality, we may assume that $\sub$ admits a pseudo-fixed point.

\begin{example} Let $\A=S^1$ and
consider the substitution $\sub\colon [\theta]\mapsto[\theta\alpha]\,[\theta\beta]\,[\theta\gamma]$, with $\alpha,\beta,\gamma\in \A$. We can instead consider \[\sub\coloneqq \sub'\colon [\theta]\mapsto [\theta\alpha\beta^{-1}]\,[\theta]\,[\theta\gamma\beta^{-1}],\]
which has a fixed internal letter at position $s=1$. 
Fix $\theta\in\A$ and let $x^{(0)}$ be the bi-infinite word given by $
x^{(0)}=\cdots [\theta\alpha\beta^{-1}]\mid[\theta]\,[\theta\beta^{-1}\gamma]\cdots$.
Here, the vertical line $|$ signifies the location of the origin. Substituting and shifting by $-s=-1$ yields
\[
x^{(1)}=\big(\sigma^{-1}\circ\sub\big)(x^{(0)})=\cdots [\theta\alpha\gamma^{-2}\beta][\theta\alpha\beta^{-1}]\mid[\theta][\theta\beta^{-1}\gamma][\theta\alpha\beta^{-2}\gamma]\cdots.
\]
By iterating the process, one fixes the letters in more positions, eventually exhausting all $m \in \Z$, which yields the limit $x$ with $\sub(x)=\sigma(x)$.  \exend
\end{example}

\section{Spectral theory}

\subsection{Point sets, measures and diffraction}

In this section, we following the exposition in \cite{BG:book} on diffraction. 
A set $\varLambda\subset \mathbb{R}^d$ is called a \emph{Delone set} if it is uniformly discrete (i.e., there is an $r>0$ such that every ball of radius $r$ contains at most one point in $\varLambda$) and it is relatively dense (i.e., there is  an $R>0$ such that every ball of radius $R$ contains at least two points in $\varLambda$). In this work, we only deal with Delone sets in dimension $d = 1$. A Delone set is called a \emph{Meyer set} if the Minkowski difference $\varLambda-\varLambda$ is also a Delone set.
A weighted Dirac comb supported on $\varLambda$ is given by $\omega=\sum_{z\in \varLambda} f(z)\delta_z$, with $f\colon \varLambda\to \mathbb{C}$, where $\delta_z$ is a Dirac mass at $z$. This is an unbounded (Radon) measure, i.e., a complex-valued linear functional on the space $C_{c}(\R)$ of compactly supported functions on $\R$.

For our spectral analysis, we need to build a weighted Dirac comb from an element $w\in X_\sub$. For this, we need the intermediate step of building a tiling $\mathcal{T}_w$ of $\mathbb{R}$ from $w$. 
In this work, we  restrict to substitutions of constant length and so we associate to each letter $a\in\A$ a tile $\mathfrak{t}_{a}$ of length $1$. 
Next, we derive a coloured point set $\varLambda_w=\left\{(a_z,z)\mid a_z\in \A,\, z\in \varLambda \subset \mathbb{R}\right\}$ from $\mathcal{T}_w$. To do this, we identify the location of a tile in $\mathcal{T}_w$ to be the location of its left endpoint. Since every tile has length $1$, $\text{supp}(\varLambda_w)=\mathbb{Z}$. 
The coloured point set $\varLambda_w$ then satisfies 
$a_z=w_z$, where $w_z$ is the letter at position $z$ in $w\in X_\sub$. Using $\varLambda_w$, we can define a weighted Dirac comb $\omega$ via $\omega=\sum_{z\in \mathbb{Z}} u(w_z)\delta_{z}$, where $u\colon \A\to \mathbb{C}$ is a continuous (hence bounded) function. 

The autocorrelation measure $\gamma$ associated with $\omega$ is given by $\gamma=\omega \circledast \widetilde{\omega}$, where $\circledast$ is the Eberlein convolution, i.e.,
\[
\gamma=\omega \circledast \widetilde{\omega}= \lim_{R\to \infty} \frac{1}{2R+1} \omega|^{ }_{[-R,R]}\ast \widetilde{\omega|^{ }_{[-R,R]}}=\lim_{R\to\infty} \gamma^{(R)}.  
\]
Here, $\ast$ is the usual convolution for finite measures and $\mu|^{ }_{[-R,R]}$ is the restriction of a measure $\mu$ on $[-R,R]$. The twisted measure $\widetilde{\mu}$ can be defined via test functions: $\widetilde{\mu}(f)=\overline{\mu(\widetilde{f})}$ with $\widetilde{f}(x):=\overline{f(-x)}$ for $C_{c}(\R)$.

It follows from a general result regarding lattice-supported weighted Dirac combs that the sequence $\left\{\gamma^{(R)}\right\}$ admit at least one limit point $\gamma_u=\sum_{m\in \Z}\eta_u(m)\delta_m$ \cite{Baake,BaakeMoody}. For a fixed weight function $u$ and $m \in \Z$, the autocorrelation coefficient $\eta^{ }_u(m)$ is given by
\begin{equation}\label{eq:fourier-coeff}
\eta^{ }_u(m)=\lim_{N\rightarrow\infty}\frac{1}{2N+1}\sum^{N}_{j=-N}u(w^{ }_j)\overline{u(w^{ }_{j+m})}.
\end{equation}
When the subshift is uniquely ergodic, one has that $\gamma^{ }_u$ is the unique autocorrelation for all elements in the subshift $X_\sub$, which is true for all cases treated here. 

\begin{definition}
Given a weighted Dirac comb $\omega$, its
\emph{diffraction measure} is the Fourier transform $\widehat{\gamma^{ }_{u}}$ of its autocorrelation $\gamma^{ }_u$.
\end{definition}

Unless stated otherwise, we suppress the subscript $u$ and assume that a weight function has been chosen. 
The diffraction $\widehat{\gamma}$ is a positive measure on $\R$, which by the Lebesgue decomposition theorem admits the splitting
\[
\widehat{\gamma}=\widehat{\gamma}^{ }_{\textsf{pp}}+\widehat{\gamma}^{ }_{\textsf{ac}}+\widehat{\gamma}^{ }_{\textsf{sc}}
\]
into pure point, absolutely continuous and singular continuous components, not all of which vanish.
One main question in diffraction theory is determining whether a certain component is present. 

\begin{remark}
Since $\omega$ is supported on $\Z$, the diffraction $\widehat{\gamma}$ is $\Z$-periodic, i.e., one has $\widehat{\gamma}=\delta_{\Z}\ast\widehat{\gamma}_{\text{FD}}$, where $\widehat{\gamma}^{ }_{\text{FD}}$ (called the fundamental diffraction) is a positive measure on $\mathbb{T}$; see \cite{BLvE}. To determine the spectral type of $\widehat{\gamma}$ is suffices to look at $\widehat{\gamma}^{ }_{\text{FD}}$, whose Fourier coefficients are given by $\eta_u(m)$ in Eq.~\eqref{eq:fourier-coeff}. \exend
\end{remark}

Since the main classes of examples we deal with in this work are generalisations of the period doubling, Thue--Morse, and Rudin--Shapiro substitutions, we include them here and their corresponding spectral properties for comparison. All of them are primitive and hence uniquely ergodic. For more details on the respective diffraction measures, we refer the reader to \cite{BG:book} and \cite{Queffelec}.

\begin{example}\label{ex: finite alph}
$\text{ }$
\begin{enumerate}
\item \textbf{(Period doubling)} Consider the binary alphabet $\A=\left\{a,b\right\}$. The \emph{period doubling} substitution is given by 
$\sub_{\text{pd}}\colon a\mapsto ab, b\mapsto aa$. 
For any element $w$ in the subshift $X_{\text{pd}}$ and \emph{any} non-zero choice of weight function $u\colon \A\to \mathbb{C}$, the diffraction $\widehat{\gamma^{ }_u}$ is pure point. 
\item \textbf{(Thue--Morse) }
Consider the binary alphabet $\A=\left\{a,b\right\}$. The \emph{Thue--Morse} substitution is given by 
$\sub_{\text{TM}}\colon a\mapsto ab, b\mapsto ba$. 
For any element $w$ in the subshift $X_{\text{TM}}$ and for the weight function $u\colon \A\to \mathbb{C}$ given by 
$u(a)=1,\, u(b)=-1$, the diffraction $\widehat{\gamma^{ }_u}$ is singular continuous with respect to Lebesgue measure. 
\item\textbf{(Rudin--Shapiro)}
Consider the quaternary alphabet $\A=\left\{a,b,\overline{a},\overline{b}\right\}$. The \emph{Rudin--Shapiro} substitution is given by 
$\sub_{\text{RS}}\colon a\mapsto ab, b\mapsto a\bar{b}, \bar{a}\mapsto \bar{a}\bar{b}, \bar{b}\mapsto \bar{a}b$. 
For any element $w$ in the subshift $X_{\text{RS}}$ and for the weight function $u\colon \A\to \mathbb{C}$ given by
$u(a)=u(b)=1,\, u(\bar{a})=u(\bar{b})=-1$, the diffraction $\widehat{\gamma^{ }_u}$ is absolutely continuous with respect to Lebesgue measure.

\end{enumerate}
\end{example}

It follows from a general result of Strungaru \cite[Thm.~4.1]{Strungaru} for weighted Dirac combs with Meyer set support $\varLambda$  that 
the autocorrelation $\gamma$ admits the generalised Eberlein decomposition
\begin{equation}\label{eq: generalised Eberlein}
\gamma=\gamma^{}_{s}+\gamma^{}_{0a}+\gamma^{}_{0s},
\end{equation}
where $\gamma^{}_{s},\gamma^{}_{0a}$, and $\gamma^{}_{0s}$ are pure point measures supported on $\varLambda-\varLambda$, which are in one-to-one correspondence with the measures in the Lebesgue decomposition of $\widehat{\gamma}$. That is,
\[
\widehat{\gamma^{}_{s}}=\widehat{\gamma}^{ }_{\textsf{pp}},\quad\quad\quad \widehat{\gamma^{}_{0a}}=\widehat{\gamma}^{ }_{\textsf{ac}},\quad \quad \quad
\widehat{\gamma^{}_{0s}}=\widehat{\gamma}^{ }_{\textsf{sc}}. 
\]
Here, $\gamma^{ }_{s}$ is the \emph{strongly almost periodic} part.
If it can be shown that $\gamma$ is a strongly almost periodic measure, it follows that $\widehat{\gamma}$ is a pure point measure. In our setting, the set $P_\varepsilon$ of $\varepsilon$-almost periods for $\eta$ is given by 
\begin{equation}\label{eq: set of epsilon almost periods}
P_{\varepsilon}=\big\{m \in \Z \mid |\eta(0)-\eta(m)|^{1/2}<\varepsilon\big\}. 
\end{equation}
An equivalent criterion for $\gamma$ to be strongly almost periodic is given by the following general result.

\begin{theorem}[{\cite[Thm.~5]{BaakeMoody}}]\label{thm: epsilon almost periods}
Let $\omega$ be a translation-bounded measure on  a $\sigma$-compact locally compact abelian group $G$ whose autocorrelation measure $\gamma^{ }_{\omega}$ exists and is of the form $\gamma_{\omega}=\sum_{m\in\Delta} \eta(m)\delta_m$, where $\Delta$ is Meyer. Then the following are equivalent:
\begin{enumerate}

\item The set $P_{\varepsilon}$ of $\varepsilon$-almost periods of $\eta(m)$ is relatively dense in $\Delta$.
\item  $\gamma_{\omega}$ is strongly almost periodic.
\item  $\widehat{\gamma_{\omega}}$ is pure point.
\end{enumerate}
\end{theorem}

Note that the boundedness of the weight function $u$ implies the translation boundedness of the weighted Dirac comb $\omega$. Moreover, $\Z$ is trivially a Meyer set. We use these characterisations mentioned above to show that certain diffraction measures are pure point or purely continuous (i.e., no pure point component); see Theorem~\ref{thm: absence of pp eta} and Theorem~\ref{thm: pure point coincidence}.

\begin{remark}
Recently, Lenz, Spindeler and Strungaru proved an equivalent condition for pure point diffraction at the level of the measure $\omega$, which requires $\omega$ to be \emph{mean almost periodic} \cite{LSS:mean-ap}. Here, we stick with the criterion in Theorem~\ref{thm: epsilon almost periods} because the autocorrelation coefficients are always complex-valued, meaning we can leverage the metric in $\mathbb{C}$, whereas mean almost periodicity requires comparing $w_m$ and $w_{j+m}$, which might need some extra work if the alphabet is not metrisable. \exend
\end{remark}

\subsection{Dynamical spectrum}

Diffraction is intimately connected to the dynamical spectrum of $X_\sub$. By dynamical spectrum, we mean the spectrum of the Koopman operator $U\colon L^{2}(X_\sub,\mu)\to L^{2}(X_\sub,\mu)$
associated with the shift action, with $U(f)=f\circ \sigma$ and where $\mu$ is a $\sigma$-invariant measure on $X_\sub$. Let $\mathcal{M}^{+}(\mathbb{T})$ denote the set of positive measures on $\mathbb{T}$.
For $f\in L^{2}(X_\sub,\mu)$, its \emph{spectral measure} is the unique measure $\rho_f\in \mathcal{M}^{+}(\mathbb{T})$ that satisfies $\left\langle f\mid U^{n}f\right\rangle=\int_{\mathbb{T}} z^n \dd\rho_f(z)$ for $n\in\Z$.
A measure $\rho_{\max}\in \mathcal{M}^{+}(\mathbb{T})$ is called the \emph{spectral measure of maximal type} of $U$ if $\rho_f\ll \rho_{\max}$ for all $f$, where $\ll$ denotes the absolute continuity relation for measures. By a slight abuse of notation, when $H\subseteq L^{2}(X,\mu)$ is a translation-invariant subspace, we write $\rho^{H}_{\max}$ for the maximal spectral type of $H$, where $\rho_f\ll \rho^{H}_{\max}$ for all $f\in H$. 
Equivalently, this is the maximal spectral type of the subrepresentation $U|_{H}$.

\section{Diffraction for substitutions on infinite alphabets}

\subsection{Compactly bijective substitutions}\label{sec:diff-bij}
In this section, we investigate the diffraction measure of abelian bijective substitutions, which are infinite alphabet generalisations of the Thue--Morse substitution in Example~\ref{ex: finite alph}.
From Corollary~\ref{coro:bijective-abelian}, we can assume that the alphabet $\A$ is a compact Hausdorf abelian group and the columns $\sub_j$ are group translations.
We let $S^1 \subset \C$ denote the unit circle, also considered as an abelian group. A \emph{group character} $\chi$ is a continuous group homomorphism $\chi\colon \A\rightarrow S^1$. Here, we will use the characters $\chi$ as our weight functions.
Let $w$ be a pseudo-fixed point of $\sub$ and $\chi\in \widehat{\A}$ be a  character.  
We consider the weighted Dirac comb $\omega_{\chi}=\sum_{m\in \Z}\chi(w^{ }_m)\delta_m$. The following result provides a sufficient condition for absence of absolutely continuous diffraction; compare \cite[Thm.~6]{CKM:q-mult} for one-sided sequences.

\begin{prop}\label{prop: bijective absence of ac}
Let $\sub$ be a substitution of constant length $L$ on a compact Hausdorff abelian group $\A$ such that the columns $\sub_j$ are group translations for $0\leqslant j\leqslant L-1$ and let $w$ be a pseudo-fixed point of  $\sub$. 
Then, for any  $\chi\in\widehat{\A}$, the  diffraction of the weighted Dirac comb $\omega=\sum_{m \in \Z}\chi(w^{ }_m)\delta_m$ has no  absolutely continuous component, i.e., $\widehat{\gamma}_{\textsf{ac}}=0$.  
\end{prop}

\begin{proof}
Let $\big\{\beta^{ }_{j}\big\}_{0\leqslant j\leqslant L-1}$ be the translations defining $\sub$.
Since $w$ is a pseudo-fixed point of $\sub$, one has $\sub(w)=\sigma^{s}(w)$ for some $0\leqslant s\leqslant L-1$, meaning the entries of $w$ satisfy the recurrence
$w^{ }_{Lm+k-s}=w^{ }_{m}\beta^{ }_{k}$, 
for all $0\leqslant k\leqslant L-1$ and $m \in \Z$.  
Fix $0\leqslant k\leqslant L-1$. For any $m \in \Z$, one has 
\begin{align}
\eta(Lm+k)&=\lim_{N\rightarrow\infty}\frac{1}{2N+1}\sum_{r=-s}^{L-1-s}\sum_{j=-\lfloor \frac{N}{L}\rfloor-\varphi^{-}_{r}}^{\lfloor \frac{N}{L}\rfloor+\varphi^{+}_r} \chi(w^{ }_{Lj+r})\overline{\chi(w^{ }_{L(j+m)+r+k})}\nonumber\\
&=\lim_{N\rightarrow\infty}\frac{1}{2N+1}\sum_{r=-s}^{L-1-s}\sum_{j=-\lfloor \frac{N}{L}\rfloor-\varphi^{-}_{r}}^{\lfloor \frac{N}{L}\rfloor+\varphi^{+}_r} \chi\Big(w^{ }_j w^{-1}_{j+m+f(r,k)}\Big)\chi(\beta_{(r+s)\,\text{mod}\,L})\overline{\chi(\beta_{(r+k+s)\,\text{mod}\, L})}\nonumber\\
&=\frac{1}{L}\sum_{r=-s}^{L-1-s}\eta\Big(m+f(r,k)\Big)\chi(\beta^{ }_{(r+s)\,\text{mod}\,L})\overline{\chi(\beta_{(r+k+s)\,\text{mod}\,L})}\label{eq: recursion bijective general},
\end{align}
where 
\begin{equation}\label{eq: f and phi}
f(r,k)\coloneqq \begin{cases}
0, & r+k\leqslant L-1-s,\\
\Big\lfloor\frac{r+k+s}{L}\Big\rfloor, & \text{otherwise},
\end{cases} \quad \text{and}\quad 
\varphi^{\pm}\coloneqq \begin{cases}
1, & N-L\lfloor\frac{N}{L}\rfloor\mp r\geqslant L,\\
0,& 0 \leqslant N-L\lfloor\frac{N}{L}\rfloor\mp r\leqslant L-1,\\
-1,& \text{otherwise}.
\end{cases} 
\end{equation}
Note that these linear recurrences are homogeneous and that,  for $0\leqslant m\leqslant L-1$, the values of $\eta(m)$ depend solely on the values $\eta(0)$ and $\eta(1)$. 
In particular, an explicit computation yields
\begin{align}
\eta(1)&=\frac{\eta(0)\sum_{r=0}^{L-2}\chi(\beta^{ }_{r})\overline{\chi(\beta_{(r+1)\,\text{mod}\,L})}}{L\big(1-\frac{1}{L}\chi(\beta{ }_{L-1})\overline{\chi(\beta^{ }_0)}\big)}\quad \label{eq: recursion L one}\\
\eta(L-1)&=\frac{1}{L}\eta(0)\chi(\beta^{ }_0)\overline{\chi(\beta^{ }_{L-1})}+\frac{1}{L}\sum^{L-1}_{r=1}\eta(1)\chi(\beta^{ }_{r})\overline{\chi(\beta_{(r+L-1)\,\text{mod}\,L})}\label{eq: recursion L minus one}\\
\eta(L^{\ell}m)&=\eta(m),\quad \text{for any } m \in \Z \text{ and }\ell\in\mathbb{N}\label{eq: recursion L zero}. 
\end{align}
The key takeaway here is that the relations \eqref{eq: recursion L one}-\eqref{eq: recursion L zero} no longer depend on $s$. Hence, our analysis proceeds as if we had a true fixed point.
Due to the homogeneity of the relations, and the mutual measure-theoretic orthogonality of $\big(\widehat{\gamma}\big)_{\textsf{pp}},\big(\widehat{\gamma}\big)_{\textsf{ac}}$ and $\big(\widehat{\gamma}\big)_{\textsf{sc}}$, one finds that the individual components $\gamma^{ }_s,\gamma^{ }_{0a}$ and $\gamma^{ }_{0s}$ of the autocorrelation satisfy these relations independently; compare \cite[Prop.~2]{BG-squiral}. 

We then consider $\gamma^{ }_{0a}=\sum_{m \in \Z}\eta_{\textsf{ac}}(m)\delta_m$ which gives rise to the $\textsf{ac}$ component. By Eq.~\eqref{eq: recursion L zero} and the Riemann--Lebesgue Lemma, one has $\eta_{\textsf{ac}}(m)=0$, for all $m\geqslant 1$; see \cite[Prop.~8.6]{BG:book} and \cite[Prop.~3.1]{Frank-HD}. To prove the claim, it remains to show that  $\eta_{\textsf{ac}}(0)=0$. 
If $\eta_{\textsf{ac}}(0)\neq 0$, Eq.~\eqref{eq: recursion L minus one} together with $\eta_{\textsf{ac}}(1)=0$ yields
$\eta_{\textsf{ac}}(L-1)=\frac{1}{L}\eta_{\textsf{ac}}(0)\chi(\beta^{ }_0)\overline{\chi(\beta^{ }_{L-1})}\neq 0$,
which contradicts the previous observation. Hence, $\gamma^{ }_{0a}=0$ and consequently $\widehat{\gamma}^{ }_{\textsf{ac}}=0$. 
\end{proof}

\begin{remark}
The set of recurrences in Eq.~\eqref{eq: recursion bijective general} imply that the sequence $\left\{\eta(m)\right\}_{m\geqslant 0}$ associated with such a substitution is $L$-regular over $\C$ in the sense of Allouche and Shallit; compare \cite[Ex.~14]{AS:regular}. \exend
\end{remark}

We now give a sufficient criterion for the absence of pure point diffraction components.

\begin{theorem}\label{thm: absence of pp eta}
Let $\sub$ be as in Proposition~\textnormal{\ref{prop: bijective absence of ac}} such that $|\eta(1)|<1$. Then, the diffraction $\widehat{\gamma}$ associated with $\omega$ does not have a pure point component. 
\end{theorem}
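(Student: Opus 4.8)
The plan is to apply Theorem~\ref{thm: epsilon almost periods} and show that the set of $\varepsilon$-almost periods $P_\varepsilon$ of $\eta$ fails to be relatively dense whenever $\varepsilon$ is small, so that $\gamma$ cannot be strongly almost periodic. Since strong almost periodicity of $\gamma$ is equivalent to pure point diffraction, its failure will force $\widehat{\gamma}^{ }_{\textsf{pp}}$ to vanish entirely. The point is that a nontrivial pure point component would manifest as a genuine strongly almost periodic contribution to $\gamma$, and by the generalised Eberlein decomposition in Eq.~\eqref{eq: generalised Eberlein} together with the fact that $\gamma^{ }_s$ satisfies the same homogeneous recurrences \eqref{eq: recursion L one}--\eqref{eq: recursion L zero} independently, it suffices to rule out $\gamma^{ }_s \neq 0$ via the almost-periodicity criterion applied to $\eta$ itself.

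First I would exploit the exact self-similarity relation \eqref{eq: recursion L zero}, namely $\eta(L^\ell m) = \eta(m)$ for all $\ell$. Evaluating at $m = 1$ gives $\eta(L^\ell) = \eta(1)$ for every $\ell \in \N$, and hence the differences $|\eta(0) - \eta(L^\ell)| = |\eta(0) - \eta(1)|$ are constant along the sparse sequence $\{L^\ell\}_{\ell \geqslant 1}$. Combined with $\eta(0) = 1$ (the value of the autocorrelation at the origin, since $|\chi(w_m)| = 1$), the hypothesis $|\eta(1)| < 1$ yields $|\eta(0) - \eta(1)| = |1 - \eta(1)| > 0$, a strictly positive quantity. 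So for any $\varepsilon$ with $\varepsilon^2 < |1 - \eta(1)|$, none of the points $L^\ell$ lies in $P_\varepsilon$, by the explicit description \eqref{eq: set of epsilon almost periods}.

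The heart of the argument is then to leverage this to break relative density of $P_\varepsilon$. The scaling relation propagates: for any $m$ we have $\eta(L^\ell m) = \eta(m)$, so the entire pattern of $\eta$-values at scale $1$ is reproduced, rescaled by $L^\ell$, at arbitrarily large scales. In particular, if $m_0$ realises a value $\eta(m_0)$ with $|\eta(0) - \eta(m_0)|^{1/2} \geqslant \varepsilon$ (which $m_0 = 1$ does), then every $L^\ell m_0$ also fails the $\varepsilon$-almost-period condition, and these ``bad'' positions can be spread across every scale. The cleanest route is to show that for suitable small $\varepsilon$ the gaps in $P_\varepsilon$ grow without bound: around each $L^\ell$ there is a forbidden position, and since consecutive powers $L^\ell, L^{\ell+1}$ are separated by a factor $L$, any genuinely relatively dense set would have to contain almost-periods arbitrarily close (in relative terms) to the $L^\ell$, which the self-similarity structure prevents. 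Concretely I would argue that if $P_\varepsilon$ were relatively dense with gap bound $R$, then every interval of length $R$ contains an almost-period; applying the scaling $\eta(L^\ell m)=\eta(m)$ I would derive that almost-periods must cluster near multiples that also carry the value $\eta(1)$, contradicting $\varepsilon^2 < |1-\eta(1)|$.

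The main obstacle is precisely making this density contradiction rigorous: the relation \eqref{eq: recursion L zero} controls $\eta$ only along the multiplicative lattice $\{L^\ell m\}$, and one must show that these constraints alone obstruct relative density of $P_\varepsilon$, rather than merely exhibiting one sparse forbidden sequence (a single sparse ``bad'' set is compatible with a relatively dense $P_\varepsilon$). I expect the correct mechanism is that an $\varepsilon$-almost period $p$ must \emph{also} be an $\varepsilon'$-almost period at every rescaled copy, forcing $|\eta(p) - \eta(0)|$ small simultaneously with the scaling identity forcing it to equal a nonzero constant along $\{L^\ell\}$; pinning down this simultaneous constraint, and choosing $\varepsilon$ uniformly so that the threshold $\varepsilon^2 < |1-\eta(1)|$ survives, is the delicate step. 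If the direct density argument proves awkward, the fallback is to pass to the Fourier side and show that the diffraction of $\gamma^{ }_s$, being both pure point and forced by \eqref{eq: recursion L zero} to be invariant under the scaling $z \mapsto z^{1/L}$ of $\mathbb{T}$, can have no atoms unless $\eta(1) = \eta(0)$, contradicting $|\eta(1)| < 1$.
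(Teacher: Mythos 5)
Your general framework is the right one---the Eberlein decomposition, the fact that the components inherit the homogeneous recurrences by mutual singularity, and the plan to contradict Theorem~\ref{thm: epsilon almost periods}---but there are two genuine gaps. The first is logical: showing that the $\varepsilon$-almost periods of $\eta$ itself fail to be relatively dense only proves that $\gamma$ is not strongly almost periodic, i.e.\ that the diffraction is not \emph{purely} pure point; it does not force $\widehat{\gamma}^{ }_{\textsf{pp}}=0$, since a mixed diffraction also has a non-almost-periodic autocorrelation. Your opening sentence asserts exactly this non sequitur. To rule out a pure point \emph{component} you must run the entire argument on $\eta^{ }_{\textsf{pp}}$, the coefficients of $\gamma^{ }_s$ (which, by the decomposition, is the one measure known to be strongly almost periodic). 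You gesture at this (``it suffices to rule out $\gamma^{ }_s \neq 0$''), but all your subsequent computations use $\eta$ and the normalisation $\eta(0)=1$, which is not available for $\eta^{ }_{\textsf{pp}}$; what one actually has is $\eta^{ }_{\textsf{pp}}(0)>0$ when $\gamma^{ }_s\neq 0$, together with $|\eta^{ }_{\textsf{pp}}(1)|<|\eta^{ }_{\textsf{pp}}(0)|$, because Eq.~\eqref{eq: recursion L one} holds verbatim for $\eta^{ }_{\textsf{pp}}$ with the same multiplier, whose modulus is $<1$ by the hypothesis $|\eta(1)|<1$.

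The second gap is the one you candidly flag yourself, and it is fatal as the proposal stands: exhibiting the sparse forbidden set $\{L^\ell m_0\}$ is compatible with $P_\varepsilon$ being relatively dense, and neither of your suggested repairs (the ``clustering'' heuristic, or the Fourier-side claim that scaling invariance of a pure point measure forbids atoms) is substantiated. The missing idea in the paper's proof is to use the \emph{full} recurrence Eq.~\eqref{eq: recursion bijective general}, not merely the power-scaling relation Eq.~\eqref{eq: recursion L zero}. Since $f(r,k)\in\{0,1\}$, taking absolute values in that recurrence gives
\[
|\eta^{ }_{\textsf{pp}}(Lm+k)| \;\leqslant\; \tfrac{L-k}{L}\,|\eta^{ }_{\textsf{pp}}(m)| + \tfrac{k}{L}\,|\eta^{ }_{\textsf{pp}}(m+1)|,
\]
and, writing $c>0$ for the gap $|\eta^{ }_{\textsf{pp}}(0)|-|\eta^{ }_{\textsf{pp}}(1)|$, an induction on base-$L$ expansions of $k$ propagates this to the uniform bound $|\eta^{ }_{\textsf{pp}}(0)|-|\eta^{ }_{\textsf{pp}}(k)|>c/L$ for \emph{every} $k\neq 0$. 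Consequently $P_{\varepsilon}=\varnothing$ for all $\varepsilon<\sqrt{c/L}$---not merely ``not relatively dense along a sparse sequence''---which contradicts Theorem~\ref{thm: epsilon almost periods} applied to $\gamma^{ }_s$ and forces $\gamma^{ }_s=0$. Without this propagation step across all residues $k$, the density contradiction you are after cannot be reached from Eq.~\eqref{eq: recursion L zero} alone.
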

\begin{proof}
Suppose $\widehat{\gamma}^{ }_{\textsf{pp}}\neq 0$.  Then, by Eq.~\eqref{eq: generalised Eberlein}, $\gamma^{ }_{s}=\sum_{m \in \Z}\eta^{ }_{\textsf{pp}}(m)\delta_m\neq 0$. Because $\gamma^{ }_{s}$ is a strongly almost periodic measure, Theorem~\ref{thm: epsilon almost periods} implies that the set $P_{\varepsilon}$ of $\varepsilon$-almost periods of $\eta^{ }_{\textsf{pp}}(m)$ given in  Eq.~\eqref{eq: set of epsilon almost periods} must be relatively dense, for all $\varepsilon>0$. 
We show that $|\eta(1)|<1$ implies that $|\eta^{ }_{\textsf{pp}}(m)|$ is uniformly bounded away from $|\eta_{\textsf{pp}}(0)|$, which directly contradicts the relative denseness of $P_{\varepsilon}$, for all $\varepsilon\leqslant \varepsilon'$, for some $\varepsilon'$. 

The autocorrelation coefficients of each of the three components of $\gamma$ must satisfy Eq.~\eqref{eq: recursion bijective general} independently since the corresponding component measures are mutually singular. With this, the assumption on $\eta(1)$ and Eq.~\eqref{eq: recursion L one} imply $|\eta^{ }_{\textsf{pp}}(1)|<|\eta^{ }_{\textsf{pp}}(0)|$.  Suppose  $|\eta^{ }_{\textsf{pp}}(0)|-|\eta^{ }_{\textsf{pp}}(1)|>c$, for some $c>0$. 
 Restricting the recurrences in Eq.~\eqref{eq: recursion bijective general} to $\gamma^{ }_{s}$  one gets for  $1\leqslant k\leqslant L-1$, 
\[
|\eta^{ }_{\textsf{pp}}(k)|\leqslant \frac{1}{L}\sum^{L-1}_{r=0}\bigg|\eta^{ }_{\textsf{pp}}\left(f(r,k)\right)\bigg|=\frac{L-k}{L}|\eta^{ }_{\textsf{pp}}(0)|+\frac{k}{L}|\eta^{ }_{\textsf{pp}}(1)|<|\eta^{ }_{\textsf{pp}}(0)|-\frac{k}{L}c
\]
which means 
\begin{equation}\label{eq: ineq zero and k}
|\eta^{ }_{\textsf{pp}}(0)|-|\eta^{ }_{\textsf{pp}}(k)|> \frac{1}{L}c.
\end{equation}
Now let $L\leqslant k=L m+j\leqslant L^2-1$, with $1\leqslant m\leqslant L-2$ and $0\leqslant j\leqslant L-1$. Using the same arguments as in the previous step, one obtains 
\begin{align*}
|\eta^{ }_{\textsf{pp}}(k)|&\leqslant \frac{1}{L}\sum^{L-1}_{r=0}\bigg|\eta^{ }_{\textsf{pp}}\bigg(m+f(r,j)\bigg)\bigg|=\frac{L-j}{L}|\eta^{ }_{\textsf{pp}}(m)|+\frac{j}{L}|\eta^{ }_{\textsf{pp}}(m+1)|\\
&< \frac{L-j}{L}\Big(|\eta^{ }_{\textsf{pp}}(0)|-\frac{m}{L}c\Big)+\frac{j}{L}\Big(|\eta^{ }_{\textsf{pp}}(1)|-\frac{m+1}{L}c\Big)\\
&<|\eta^{ }_{\textsf{pp}}(0)|-\frac{Lm+j}{L^2}c. 
\end{align*}
Since $\frac{1}{L}\leqslant \frac{Lm+j}{L^2} \leqslant \frac{L^2-1}{L^2}$, we recover Eq.~\eqref{eq: ineq zero and k} for all $L\leqslant k\leqslant L^2-1$. It is straight-forward to show via an inductive argument that Eq.~\eqref{eq: ineq zero and k} holds for all $k \in \Z$. This means 
\[
|\eta^{ }_{\textsf{pp}}(0)-\eta^{ }_{\textsf{pp}}(k)|\geqslant \big||\eta^{ }_{\textsf{pp}}(0)|-|\eta^{ }_{\textsf{pp}}(k)|\big| > \frac{1}{L}c, 
\]
which implies $P_{\varepsilon}= \varnothing$, for all $\varepsilon<\sqrt{\frac{c}{L}}$, contradicting the relative denseness property required for pure point diffraction. Hence, $\gamma^{ }_{s}=0$ and $\widehat{\gamma}^{ }_{\textsf{pp}}=0$. 
\end{proof}

The following corollary is immediate from Proposition~\ref{prop: bijective absence of ac} and Theorem~\ref{thm: absence of pp eta}.

\begin{coro}\label{cor: pure sc}
Let $\sub$ be as in Proposition~\textnormal{\ref{prop: bijective absence of ac}}, $w$ a pseudo-fixed point of $\sub$, and $\omega$ the associated weighted Dirac comb via the weight function $\chi\in\widehat{\A}$. If $|\eta(1)|<1$, the diffraction $\widehat{\gamma}$ of $\omega$ is purely singular continuous. \qed
\end{coro}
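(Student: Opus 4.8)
The plan is to read the result straight off the two preceding statements together with the threefold splitting of the diffraction measure. Recall that $\widehat{\gamma}$ decomposes into the mutually singular components $\widehat{\gamma}^{ }_{\textsf{pp}} + \widehat{\gamma}^{ }_{\textsf{ac}} + \widehat{\gamma}^{ }_{\textsf{sc}}$, which by the generalised Eberlein decomposition in Eq.~\eqref{eq: generalised Eberlein} are precisely the Fourier transforms of the pure point measures $\gamma^{ }_{s}$, $\gamma^{ }_{0a}$ and $\gamma^{ }_{0s}$. Proposition~\ref{prop: bijective absence of ac} gives $\widehat{\gamma}^{ }_{\textsf{ac}} = 0$, and Theorem~\ref{thm: absence of pp eta}, whose single additional hypothesis $|\eta(1)| < 1$ is exactly the standing assumption of the corollary, gives $\widehat{\gamma}^{ }_{\textsf{pp}} = 0$. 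Hence $\widehat{\gamma} = \widehat{\gamma}^{ }_{\textsf{sc}}$, so the diffraction can carry at most a singular continuous part.

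The one point meriting a line of verification beyond citing these two results is that $\widehat{\gamma}$ does not vanish identically, so that the singular continuous component is genuinely present rather than trivially absent. This is immediate from the choice of weight: since $\chi$ is a unitary character, $|\chi(w^{ }_m)| = 1$ for every $m \in \Z$, whence the central autocorrelation coefficient satisfies $\eta(0) = \lim_{N\to\infty}\frac{1}{2N+1}\sum_{j=-N}^{N}|\chi(w^{ }_j)|^2 = 1$. Thus $\gamma \neq 0$, and since the Fourier transform is injective on this class of translation-bounded measures, $\widehat{\gamma} \neq 0$. Combined with the vanishing of the other two components, this forces $\widehat{\gamma}^{ }_{\textsf{sc}} \neq 0$, so the diffraction is \emph{purely} singular continuous in the intended sense.

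There is no substantive obstacle here; the corollary is a formal consequence of the two results it invokes, which is why it is flagged as immediate. The only care needed is bookkeeping: one must confirm that both prior results apply to the \emph{same} autocorrelation $\gamma$ attached to the same data $(\varrho, w, \chi)$ — which holds because both take $\varrho$, $w$ and $\chi$ exactly as in Proposition~\ref{prop: bijective absence of ac} — and one must use the mutual singularity of the three spectral components, so that the separate vanishing of the pure point and absolutely continuous parts isolates the singular continuous part as the full diffraction measure.
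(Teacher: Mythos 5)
Your proof is correct and follows the same route as the paper, which derives the corollary immediately by combining Proposition~\ref{prop: bijective absence of ac} (no absolutely continuous part) with Theorem~\ref{thm: absence of pp eta} (no pure point part under $|\eta(1)|<1$), using the mutual singularity of the three spectral components. Your extra observation that $\eta(0)=1$ forces $\widehat{\gamma}\neq 0$, so the singular continuous part is genuinely nonzero, is a harmless (and correct) addition that the paper leaves implicit.
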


\begin{lem}\label{lem: eta different column}
Let $\sub$ be as in Proposition~\textnormal{\ref{prop: bijective absence of ac}}. Fix $\chi\in\widehat{\A}$ and suppose $\chi(\beta^{ }_0)\neq\chi(\beta^{ }_{L-1})$. Then, one has $|\eta(1)|< 1$. 
\end{lem}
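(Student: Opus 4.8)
The plan is to read $|\eta(1)|$ directly off the closed-form expression \eqref{eq: recursion L one} obtained in the proof of Proposition~\ref{prop: bijective absence of ac}. First I would record that, since $\chi$ takes values in $S^1$, every summand in the defining limit for $\eta(0)$ equals $\chi(w^{ }_j)\overline{\chi(w^{ }_j)} = 1$, whence $\eta(0) = 1$. Writing $z_r \coloneqq \chi(\beta^{ }_r) \in S^1$ for $0 \leqslant r \leqslant L-1$, substituting $\eta(0)=1$ into \eqref{eq: recursion L one}, using that $(r+1)\bmod L = r+1$ for $0 \leqslant r \leqslant L-2$, and clearing the factor $1/L$ in the denominator gives
\[
\eta(1) = \frac{\sum_{r=0}^{L-2} z_r\,\overline{z_{r+1}}}{L - z_{L-1}\,\overline{z_0}}.
\]

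The next step is to estimate numerator and denominator separately. Each term $z_r\overline{z_{r+1}}$ is a product of unit-modulus numbers and hence lies on $S^1$; the numerator is a sum of $L-1$ such terms, so the triangle inequality yields $\bigl|\sum_{r=0}^{L-2} z_r\overline{z_{r+1}}\bigr| \leqslant L-1$ unconditionally. For the denominator, $z_{L-1}\overline{z_0}$ lies on $S^1$, so writing it as $\ee^{\ii\phi}$ gives $|L - \ee^{\ii\phi}| = \sqrt{L^2 - 2L\cos\phi + 1}$, which is minimised precisely when $\cos\phi = 1$, i.e.\ when $z_{L-1}\overline{z_0} = 1$, with minimum value $L-1$.

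The crucial observation — and the only place the hypothesis enters — is that $\chi(\beta^{ }_0) \neq \chi(\beta^{ }_{L-1})$ means exactly $z_0 \neq z_{L-1}$, hence $z_{L-1}\overline{z_0} \neq 1$, so $\cos\phi < 1$ and the denominator is strictly larger than $L-1$. Combining the two bounds,
\[
|\eta(1)| = \frac{\bigl|\sum_{r=0}^{L-2} z_r\overline{z_{r+1}}\bigr|}{|L - z_{L-1}\overline{z_0}|} \leqslant \frac{L-1}{|L - z_{L-1}\overline{z_0}|} < \frac{L-1}{L-1} = 1,
\]
as required. I do not expect a genuine obstacle here: the bound $|\eta(1)| \leqslant 1$ holds for \emph{any} choice of $\chi$, and the entire content of the lemma is that the hypothesis forces the denominator strictly above $L-1$ while the numerator can never exceed $L-1$, so the strictness is supplied purely by the denominator. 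The only points worth double-checking are that $\eta(0)=1$ (so the numerator is a genuine sum of $L-1$ unit-modulus terms) and that membership in $S^1$ is preserved under multiplication and conjugation.
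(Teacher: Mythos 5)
Your proposal is correct and is essentially the paper's own proof: both read $|\eta(1)|$ off Eq.~\eqref{eq: recursion L one}, bound the numerator by $L-1$ via the triangle inequality (using $\eta(0)=1$), and use the hypothesis $\chi(\beta^{ }_{L-1})\overline{\chi(\beta^{ }_0)}\neq 1$ to force the denominator strictly above $L-1$. Your explicit computation of $|L-\ee^{\ii\phi}|$ merely spells out the estimate the paper states directly.
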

\begin{proof}
This follows directly from Eq.~\eqref{eq: recursion L one} since, 
$
|\eta(1)|\leqslant \frac{|\eta(0)|(L-1)}{|L-\chi(\beta^{ }_{L-1})\overline{\chi(\beta^{ }_0)}|}<1$,
where the last inequality holds since the  denominator satisfies $|L-\chi(\beta^{ }_{L-1})\overline{\chi(\beta^{ }_0)}|> L-1$, which follows from the assumption $\chi(\beta^{ }_{L-1})\overline{\chi(\beta^{ }_0)}=\chi(\beta^{ }_{L-1}\beta^{-1}_0)\neq 1$. 
\end{proof}

\begin{remark}
Note that when $\A=S^{1}$ the condition in Lemma~\ref{lem: eta different column} is satisfied for non-trivial $\chi\in\widehat{\A}$ whenever $\beta^{ }_0\beta^{-1}_{L-1}$ is an irrational rotation because $\text{ker}(\chi)$ is finite for all non-trivial $\chi\in\widehat{S^{1}}$. \exend
\end{remark}

\begin{theorem}\label{thm: cyclic periodic}
Let $\sub$ be a primitive substitution of length $L$ on a compact  Hausdorff abelian group $\A$ such that all of its columns are group translations. Let $w$ be a pseudo-fixed point of  $\sub$ and consider $\chi\in\widehat{\A}$.
Then, the following are equivalent:

\begin{enumerate}
\item $\widehat{\gamma^{ }_{\chi}}$ is not purely singular continuous;
\item  $|\eta(1)|=1$;
\item The Dirac comb $\omega^{ }_{\chi}=\sum_{m \in \Z} \chi(w_m)\delta_m$ can be generated by a substitution $\csub$ on a finite cyclic group $\A$ of order $n$, where $\sub$ is of the form 
\begin{equation}\label{eq: cyclic periodic}
\csub \colon [\theta]\mapsto [\theta\beta^{ }_0]\, [\theta\beta^{ }_0\beta^{ }_1] \cdots [\theta\beta^{ }_0\beta^{n-1}_1]\, [\theta\beta^{ }_0], 
\end{equation}
where $\beta^{ }_0,\beta^{ }_1\in \A$ and $\textnormal{ord}(\beta_1)=n$;
\item $\omega^{ }_{\chi}$ is periodic;
\item $\widehat{\gamma^{ }_{\chi}}$ is pure point.

\end{enumerate}

\end{theorem}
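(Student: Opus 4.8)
The plan is to prove the cyclic chain of implications $(1)\Rightarrow(2)\Rightarrow(3)\Rightarrow(4)\Rightarrow(5)\Rightarrow(1)$, with essentially all of the work concentrated in $(2)\Rightarrow(3)$. Most of the arrows are short. For $(1)\Rightarrow(2)$, recall that Proposition~\ref{prop: bijective absence of ac} already forces $\widehat{\gamma}^{ }_{\textsf{ac}}=0$, so the failure of pure singular continuity is equivalent to $\widehat{\gamma}^{ }_{\textsf{pp}}\neq 0$; if $|\eta(1)|<1$ held, the corollary combining Proposition~\ref{prop: bijective absence of ac} and Theorem~\ref{thm: absence of pp eta} would make $\widehat{\gamma^{ }_{\chi}}$ purely singular continuous, a contradiction, so $|\eta(1)|\geqslant 1$, and since a unimodular weight gives $|\eta(m)|\leqslant\eta(0)=1$ by Cauchy--Schwarz, we conclude $|\eta(1)|=1$. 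The arrow $(3)\Rightarrow(4)$ is a short induction on the iterates $\csub^k$ showing that the fixed point of the normal form Eq.~\eqref{eq: cyclic periodic} is a periodic word of period $n$, whence $\omega^{ }_{\chi}$ is periodic; $(4)\Rightarrow(5)$ is the standard fact that a crystallographic Dirac comb has pure point diffraction \cite{BG:book}; and $(5)\Rightarrow(1)$ is immediate, since $\widehat{\gamma^{ }_{\chi}}$ is a nonzero positive measure (its total intensity is $\eta(0)=1$) and so, being pure point, cannot be purely singular continuous.

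The heart of the matter is $(2)\Rightarrow(3)$. Writing $z^{ }_r=\chi(\beta^{ }_r)\in S^1$, Eq.~\eqref{eq: recursion L one} reads $\eta(1)=\big(\sum_{r=0}^{L-2}z^{ }_r\overline{z^{ }_{r+1}}\big)\big/\big(L-z^{ }_{L-1}\overline{z^{ }_0}\big)$, where the numerator has modulus at most $L-1$ and the denominator modulus at least $L-1$. Hence $|\eta(1)|=1$ forces equality in both bounds: the denominator bound yields $\chi(\beta^{ }_{L-1})=\chi(\beta^{ }_0)$, while the numerator bound forces all the unit vectors $z^{ }_r\overline{z^{ }_{r+1}}$ to coincide with a single value $\overline{\omega}$, so that $\chi(\beta^{ }_r)=\chi(\beta^{ }_0)\,\overline{\omega}^{\,r}$. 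Substituting back into $\chi(\beta^{ }_{L-1})=\chi(\beta^{ }_0)$ gives $\omega^{L-1}=1$, so $n:=\mathrm{ord}(\omega)$ divides $L-1$; in particular $L\equiv 1 \pmod n$.

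I would then extract the periodicity of the weights from this column structure. A direct induction shows $\chi\big(\varrho^k(\theta)^{ }_m\big)=\chi(\theta)\,\chi(\beta^{ }_0)^k\,\overline{\omega}^{\,h(m)}$, where $h$ satisfies $h(Lj+i)=h(j)+i$ and is therefore the base-$L$ digit sum of $m$ modulo $n$. Because $L\equiv 1\pmod n$, this digit sum is congruent to $m$, so $\overline{\omega}^{\,h(m)}=\overline{\omega}^{\,m}$; passing to the limit that defines the pseudo-fixed point $w$, the drifting prefactor $\chi(\beta^{ }_0)^k$ is absorbed into a single constant $C\in S^1$, leaving $\chi(w^{ }_m)=C\,\overline{\omega}^{\,m}$. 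This sequence is periodic of period $n$, so $\omega^{ }_{\chi}$ is (up to the global phase $C$) the $\chi$-image of the fixed point of the substitution $\csub$ on the cyclic group $\langle\omega\rangle\cong\Z/n\Z$ whose columns realise $\chi(\beta^{ }_r)=\chi(\beta^{ }_0)\overline{\omega}^{\,r}$ with $\mathrm{ord}(\beta^{ }_1)=n$ --- that is, exactly the normal form Eq.~\eqref{eq: cyclic periodic}. This establishes $(3)$ and closes the cycle.

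The main obstacle I anticipate lies in this last reduction. Although $\chi(\beta^{ }_0)$ need not be a root of unity, so that a priori the weights $\chi(w^{ }_m)$ range over an infinite subgroup of $S^1$, a single pseudo-fixed point must nonetheless be genuinely periodic; the point is that the congruence $L\equiv 1\pmod n$ collapses the base-$L$ digit-sum exponent $h(m)$ to $m\bmod n$ and makes the drift $\chi(\beta^{ }_0)^k$ stabilise in the limit. Making this limit argument rigorous, and checking that the length-$(n+1)$ normal form reproduces the given comb even when the original $\varrho$ has length $L>n+1$, is the delicate part of the proof.
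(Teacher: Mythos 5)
Your cycle of implications is the same one the paper uses, and the short arrows coincide with its treatment: \textbf{(1)}$\Rightarrow$\textbf{(2)} is Proposition~\ref{prop: bijective absence of ac} plus Theorem~\ref{thm: absence of pp eta} (your Cauchy--Schwarz remark correctly supplies $|\eta(1)|\leqslant 1$), \textbf{(3)}$\Rightarrow$\textbf{(4)} is Lemma~\ref{lem: cyclic pure point} (the paper classifies legal two-letter words rather than inducting on supertiles, but the content is the same), and \textbf{(4)}$\Rightarrow$\textbf{(5)}$\Rightarrow$\textbf{(1)} are the same one-liners. The first half of your \textbf{(2)}$\Rightarrow$\textbf{(3)} is also exactly the paper's argument: equality must hold in both estimates attached to Eq.~\eqref{eq: recursion L one}, forcing $\chi(\beta^{ }_{L-1})=\chi(\beta^{ }_0)$ and $\chi(\beta^{ }_r)\overline{\chi(\beta^{ }_{r+1})}$ constant, hence $\chi(\beta^{ }_r)=\chi(\beta^{ }_0)\,\omega^{r}$ with $\omega^{L-1}=1$ and $n=\mathrm{ord}(\omega)\mid L-1$ (minor slip: $z^{ }_r\overline{z^{ }_{r+1}}=\overline{\omega}$ gives $z^{ }_r=z^{ }_0\omega^{r}$, not $z^{ }_0\overline{\omega}^{\,r}$; this propagates through your formulas as a harmless global conjugation). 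Where you genuinely diverge is the endgame: the paper stays at the level of the columns, writes down the cyclic substitution on $\langle\chi(\beta^{ }_{s+1})\rangle\simeq C_n$ and argues that it yields the same autocorrelation recurrences as $\varrho$; you instead prove the stronger assertion that the weight sequence itself satisfies $\chi(w^{ }_m)=C\omega^{m}$. Your route is legitimate and arguably cleaner, since literal periodicity of the weights delivers \textbf{(3)} and \textbf{(4)} in one stroke (the global phase $C$ is irrelevant for the autocorrelation, just as in the paper's version).

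However, the step you flag as delicate is a genuine gap as written, and the correct fix is \emph{not} a limiting or stabilisation argument: if $\chi(\beta^{ }_0)$ were incompatible with $\omega$, the prefactor $\chi(\beta^{ }_0)^k$ would rotate forever and your formula would simply be false. What closes the gap is the pseudo-fixed point hypothesis itself, via the normalisation the paper imports from Proposition~\ref{prop: pseudo-fixed point}: one may assume $\beta^{ }_s=\mathrm{id}$ for some $1\leqslant s\leqslant L-2$ (equivalently, in the paper's convention $w^{ }_{Lm+k-s}=w^{ }_m\beta^{ }_k$, the relation at position $0$ reads $w^{ }_0=w^{ }_0\beta^{ }_s$, and a group translation with a fixed point is the identity). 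Then $1=\chi(\beta^{ }_s)=\chi(\beta^{ }_0)\omega^{s}$, so $\chi(\beta^{ }_0)=\omega^{-s}$ is itself a power of $\omega$ --- there is no drift at all. Concretely, set $u^{ }_j:=\chi(w^{ }_j)\,\overline{\omega}^{\,j}$; the recurrence, together with $n\mid L-1$ and $\chi(\beta^{ }_0)\omega^{s}=1$, gives
\[
u^{ }_{Lm+k-s}\;=\;u^{ }_m\,\omega^{m(1-L)}\,\chi(\beta^{ }_0)\,\omega^{s}\;=\;u^{ }_m,
\]
that is, $u^{ }_j=u^{ }_{\lfloor (j+s)/L\rfloor}$ for every $j\in\Z$. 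Since $1\leqslant s\leqslant L-2$, iterating the map $j\mapsto\lfloor (j+s)/L\rfloor$ sends every index to $0$ in finitely many steps, so $u^{ }_j\equiv u^{ }_0$ and $\chi(w^{ }_m)=u^{ }_0\,\omega^{m}$ exactly. This replaces your digit-sum-plus-limit sketch by a two-line computation, and it also settles your final worry: once the weights are literally $n$-periodic, the comb is visibly generated by $\csub$ on $\langle\omega\rangle\simeq C_n$, regardless of the original length $L$.
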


Before we go to the proof of Theorem~\ref{thm: cyclic periodic}, we will need the following Lemma; see
\cite[Cor.~4.3]{KY:Ellis-bij}.

\begin{lem}\label{lem: cyclic pure point}
The substitution $\csub$ of Eq.~\eqref{eq: cyclic periodic} defines a periodic subshift. 
%Let $\sub$ be a substitution which is of the form given in Eq.~\eqref{eq: cyclic periodic}. Then $\sub$ defines a periodic subshift. 
\end{lem}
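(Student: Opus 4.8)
```latex
The plan is to show directly that the substitution $\csub$ of Eq.~\eqref{eq: cyclic periodic} forces a strong periodicity on any bi-infinite word in its subshift, by exploiting the telescoping structure of the columns together with the fact that $\textnormal{ord}(\beta^{ }_1)=n$. First I would observe that the alphabet $\A$ is the finite cyclic group of order $n$, so the subshift $X_{\csub}\subseteq\A^{\Z}$ lives over a \emph{finite} alphabet, and hence it suffices to exhibit a single period $p$ such that $\sigma^p$ acts trivially on $X_{\csub}$; minimality (Theorem~\ref{thm:tile-lengths}) then guarantees that the whole subshift consists of the finitely many shifts of one periodic word.

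The key computation is to understand the images of letters under $\csub$ relative to one another. Writing $\csub([\theta])=[\theta\beta^{ }_0]\,[\theta\beta^{ }_0\beta^{ }_1]\cdots[\theta\beta^{ }_0\beta_1^{n-1}]\,[\theta\beta^{ }_0]$, I note that the block of interior letters $[\theta\beta^{ }_0],[\theta\beta^{ }_0\beta^{ }_1],\ldots,[\theta\beta^{ }_0\beta_1^{n-1}]$ ranges over \emph{every} element of the coset $\theta\beta^{ }_0\langle\beta^{ }_1\rangle=\theta\beta^{ }_0\A$, that is, over all of $\A$ since $\langle\beta^{ }_1\rangle=\A$. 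Thus the letters appearing in $\csub([\theta])$ depend on $\theta$ only through the single group element $\theta\beta^{ }_0$, and moreover the first and last columns agree, each equal to $[\theta\beta^{ }_0]$. The essential structural point to extract is that consecutive images $\csub([\theta])$ and $\csub([\theta'])$ overlap compatibly: the last letter of $\csub([\theta])$ is $[\theta\beta^{ }_0]$ and the first letter of $\csub([\theta'])$ is $[\theta'\beta^{ }_0]$, and the internal pattern of each image block is the fixed cyclically-ordered enumeration of $\A$ shifted by the common offset $\theta\beta^{ }_0$ (respectively $\theta'\beta^{ }_0$).

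From here I would argue that the second iterate $\csub^2$, or more conveniently a suitable power, has the property that the image of any letter is determined by a periodic pattern of period dividing $Ln$, since the offsets cycle through $\A$ with period exactly $n=\textnormal{ord}(\beta^{ }_1)$ as the enumeration index advances, and the interior columns already sweep a full period of $\A$. Concretely, one checks that the word $\csub([\theta])$ is itself a fixed finite word independent of $\theta$ up to the global translation by $\theta\beta^{ }_0$, so that the legal bi-infinite words are precisely the translates of the single periodic sequence obtained by iterating the internal enumeration; this sequence is periodic with period $L=n+1$ (or a small multiple thereof) because both the boundary columns coincide and the interior cyclically exhausts $\A$. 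Thus every $w\in X_{\csub}$ is periodic.

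The step I expect to be the main obstacle is bookkeeping the overlap between consecutive image blocks precisely enough to read off the exact period: one must verify that the shared boundary letter $[\theta\beta^{ }_0]$ of adjacent blocks is consistent with the level-one structure of a fixed point, so that no aperiodic ``seam'' can arise between the image of one letter and the image of the next. I would handle this by passing to a pseudo-fixed point $w$ (available by Proposition~\ref{prop: pseudo-fixed point}) and checking directly that the recurrence $w^{ }_{Lm+k-s}=w^{ }_m\beta^{ }_k$ degenerates, for this particular column data, into a relation forcing $w^{ }_{m+p}=w^{ }_m$ for an explicit $p$; once this single periodicity relation is established at the level of the fixed point, minimality upgrades it to periodicity of the entire subshift $X_{\csub}$.
```
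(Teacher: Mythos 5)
Your overall route --- exhibit one periodic point and upgrade to the whole subshift via minimality --- is legitimate (primitivity of $\csub$ is immediate because $\langle\beta_1\rangle=\A$, so $\csub([\theta])$ contains every letter, and Theorem~\ref{thm:tile-lengths} then gives minimality), and your structural observations about the image blocks are correct. The genuine gap sits exactly where you yourself flag ``the main obstacle'': nothing in the proposal rules out an aperiodic seam. The pseudo-fixed point recurrence $w_{Lm+k-s}=w_m\beta_k$ gives $w_{j+1}=\beta_1 w_j$ only when $j$ and $j+1$ lie in the same level-one block; when they straddle a block boundary, the recurrence yields only $(w_j,w_{j+1})=(w_m\beta_0,\,w_{m+1}\beta_0)$, i.e.\ the difference $w_j^{-1}w_{j+1}$ at a seam equals the difference $w_m^{-1}w_{m+1}$ one level up --- which is not a periodicity relation. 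Closing the argument requires an induction: every seam position reduces, after finitely many applications of the level-up map $j\mapsto (j-n+s)/L$, to an internal pair (here one uses that $s$ is internal, $1\leqslant s\leqslant L-2$, so this map has no integer fixed point and the reduction terminates), whence every difference equals $\beta_1$ and $w$ has period $n=\textnormal{ord}(\beta_1)$. That propagation-across-levels induction is the entire content of the lemma, and ``checking directly that the recurrence degenerates'' is a promise to carry it out, not a proof. For comparison, the paper runs the same induction purely at the level of the language, needing neither a pseudo-fixed point nor minimality: every legal $2$-letter word is either internal to some $\csub([\theta])$ or is the seam pair of $\csub$ applied to a legal $2$-letter word, so by induction all legal $2$-letter words have the form $[\theta][\beta_1\theta]$; every element of $X_{\csub}$ then satisfies $w_{j+1}=\beta_1 w_j$ and hence has period $n$.

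Two further points confirm that the computation was not actually done. First, the period you assert, $L=n+1$ ``(or a small multiple thereof)'', is wrong: the period is $n$, and a bi-infinite word with periods $n$ and $n+1$ would be constant (for $\A=C_2$, $\beta_0=e$, $\beta_1=g$, the subshift is the orbit of $\cdots egeg\cdots$, which has period $2$, not $3$). This error is not fatal to the conclusion --- any finite period would do --- but it shows the ``concrete check'' was skipped. Second, Proposition~\ref{prop: pseudo-fixed point} does not produce a pseudo-fixed point for $\csub$ itself but for a modified substitution of the form $\csub^{m}$ composed with a translation; its columns are no longer literally $\beta_0\beta_1^{k}$, so you would additionally need to verify that the relevant structure (consecutive columns differ by $\beta_1$, including the wrap-around from last column to first) survives taking powers and translating. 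It does, but that verification is again part of the bookkeeping your proposal defers.
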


\begin{proof}
Every 2-letter subword of a substituted letter is of the form $[\theta][\beta_1 \theta]$ for some $\theta \in \A$. Moreover, given a pair of this form,
\[
\csub([\theta] \bullet [\beta_1 \theta]) = [\theta \beta_0] \cdots [\theta \beta_0] \bullet [\beta_1 \theta \beta_0] \cdots [\beta_1 \theta \beta_0],
\]
where we emphasise the location of the boundary of supertiles. Note that the pair meeting over the supertile boundary is of the same form. Every 2-letter word generated by $\csub$ is either as a subword of some $\csub(\theta)$, or as the pair at the supertile boundary of $\csub(\theta\theta')$, where $\theta \theta'$ is also generated by $\csub$. It follows that all legal 2-letter words are of the form $[\theta][\beta_1 \theta]$. Then periodicity follows from $\textnormal{ord}(\beta_1) = \#\A = n$.
\end{proof}

\begin{proof}[Proof of Theorem~\textnormal{\ref{thm: cyclic periodic}}]
$\text{ }$
\begin{itemize}
\item \textbf{(iii)} $\implies$ \textbf{(iv)} follows from Lemma~\ref{lem: cyclic pure point}. 
\item \textbf{(iv)} $\implies$ \textbf{(v)} is immediate from the periodicity of $\omega^{ }_{\chi}$. 
\item \textbf{(v)} $\implies$ \textbf{(i)} is clear.
\item \textbf{(i)} $\implies$ \textbf{(ii)} follows from Corollary~\ref{cor: pure sc}.
\end{itemize}

It then remains to show that \textbf{(ii)} implies \textbf{(iii)}.
Suppose $|\eta(1)|=1$. Consider the substitution $\sub:=\sub^{ }_0\sub^{ }_1\cdots \sub^{ }_{L-1}$, with $\sub^{ }_i(\theta):=\theta\beta_i$ for $\beta_i \in \A$. From the proof of Proposition~\ref{prop: pseudo-fixed point}, we can assume that $\beta_s=\text{id}$ for some $1\leqslant s\leqslant L-2$, 
since multiplying all $\beta_i$ with $\beta^{-1}_{s}$ yields a substitution which generates the same subshift.  
 From Lemma~\ref{lem: eta different column}, we can assume  that $\chi(\beta^{ }_0)=\chi(\beta^{ }_{L-1})$, since otherwise $|\eta(1)|<1$.
From Eq.~\eqref{eq: recursion L one}, one must then have
\[
\bigg|\sum_{r=0}^{L-2}\chi(\beta_r)\overline{\chi(\beta_{(r+1)\,\text{mod}\,L})}\bigg|=L-1,
\]
which only holds when $\chi(\beta^{ }_{j}\beta^{-1}_{j+1})$ is constant for all $0\leqslant j\leqslant L-2$.  
Since $\beta^{ }_s=\text{id}$, we obtain $\chi(\beta^{-1}_{s+1})=\chi(\beta^{ }_{s+1}\beta^{-1}_{s+2})$. One can show by induction that $\chi(\beta^{ }_{r})=\chi(\beta_{s+1})^{r-s}$ holds for all $s+1\leqslant r\leqslant L-1$. We also have 
$\chi(\beta^{ }_{r})=\chi(\beta_{s+1})^{L-(s+1-r)}$ for $0\leqslant r\leqslant s$. It follows that $\chi(\beta_{s+1})^{L-1}=\text{id}$, so $\chi(\beta_{s+1})\in S^1$ must be a root of unity of order at most $L-1$. 
Now, let $g^{ }_0 =\chi(\beta^{ }_0), g=\chi(\beta^{ }_{s+1})$ and $ n=\text{ord}(\chi(\beta^{ }_{s+1}))$
and consider $\A'=\left\langle g\right\rangle\simeq C_n$. One can check that the substitution $\sub'\colon C_n\to C_n^{+}$ given by 
\[
\sub': [a] \mapsto [a g^{ }_0 ]\, [a g^{ }_0 g]\, [a g^{ }_0 g^2] \cdots [a g^{ }_0 g^{n-1}] \,[a g^{ }_0]
\]
is primitive and yields the exact same recurrence relations for the autocorrelation 
$\eta(m)$ as $\sub$, which completes the proof. 
\end{proof}

As a remarkable consequence, for a pseudo-fixed point $w$ of an abelian bijective substitution $\sub$ on a compact alphabet, if $\chi(w_j)$ admits infinitely many complex values, then $|\eta(1)|<1$ and the diffraction $\widehat{\gamma^{ }_{\chi}}$ is purely singular continuous.

\begin{example}\label{ex: bijective 1d}
Let $\A=S^1$ and 
define the substitution $\sub^{ }_1\colon \A\mapsto \A^{+}$ via
\[
\sub^{ }_1\colon [\theta] \mapsto [\theta] \hspace{2mm}[\theta] \hspace{2mm} [\alpha\theta]\hspace{2mm} [\theta]
\]
where $\alpha\in \A$. 
Consider the  corresponding bi-infinite fixed point $w$ associated with the legal seed $1|1$. It suffices to look at the one-sided fixed point, which satisfies the recurrences
\begin{align*}
w_{4m}&=w_m        &  w_{4m+2}&=\alpha w_{m}\nonumber\\
w_{4m+1}&=w_{m}    &   w_{4m+3}&=w_m  %\label{eq: recursion}
\end{align*}
where $w_{m}$ is the letter at position $m\in \mathbb{N}$. Here, we write $\alpha=\ee^{2\pi\ii\varphi}$ for some $\varphi\in [0,1)$.

Let $\charn\in \widehat{S^1}$ be given by $\charn(\theta)=\theta^n$ for $\theta\in S^{1}$.
One can show by direct computation that
$
\eta^{ }_n(1)=\eta^{ }_{n}(4^r)=\frac{1}{3}+\frac{2}{3}\cos(2\pi n\varphi)
$
for any $r\in\mathbb{N}$. This implies that $|\eta(1)|=1$ only when $\varphi\in\mathbb{Q}$ with $n\varphi \in \Z$. It follows from Theorem~\ref{thm: cyclic periodic} that, for all $\alpha\in (0,1)$ not satisfying this condition, the diffraction of $\omega^{ }_{\charn}$ is purely singular continuous for any $x\in X_\sub$. When $\alpha$ is an $n$th root of unity, $\omega^{ }_{\charn}=\delta_\Z$, which obviously has pure point diffraction. \exend
\end{example}

\begin{example}[Substitution with a non-trivial pure point factor]
Here, we demonstrate how the characters can reveal interesting factors. Let $\A=C_2\times S^{1}$ with $C_2=\left\{e,g\right\}$ and consider $\sub$, which is given by $\sub\colon [\theta] \mapsto [\theta\beta_0]\,[\theta\beta_1]\, [\theta\beta_2]$ with $\beta_0=\beta_2=(e,1)$ and $\beta_1=(g,\alpha)$, where $\alpha$ is an irrational rotation. Let $\chi_1\otimes\chi_2\in \widehat{\A}\simeq C_2\times \Z$. When $\chi_1$ is non-trivial and $\chi_2$ is trivial, this character induces a factor map to the substitution on $C_2$ given by $\sub'\colon [a ]\mapsto [a]\, [ag] \, [a]$, which by Theorem~\ref{thm: cyclic periodic} is periodic.   
When $\chi_2$ is non-trivial, $\chi_1\otimes\chi_2(w)$ always admits infinitely many complex values due to the irrationality of $\alpha$ and hence must give rise to singular continuous diffraction by Theorem~\ref{thm: cyclic periodic}.  This means that the maximal equicontinuous factor of $\sub$ must be $(\Z_3\times C_2,T_1\times T_2)$, where $\Z_3$ is the $3$-adic integers, $T_1$ is the $+1$-map and $C_2$ is endowed with the group multiplication $T_2: h\to gh$ by $g$; compare \cite{Frank-HD} and Section~\ref{sec:spin}.
\exend
\end{example}

\subsection{Substitutions with coincidences}\label{SEC:coincidence}
\begin{definition}
Let $\sub$ be a constant-length substitution on a compact alphabet $\A$. We say that $\sub$ admits a \emph{coincidence} if for some $0\leqslant j\leqslant L-1$,  $
\sub(\theta)_{j}=\beta_0$, for all $\theta\in\A$, where $\beta_0$ is a constant. 
\end{definition}

Such substitutions are infinite alphabet generalisations of the period doubling substitution in Example~\ref{ex: finite alph}.
As in Proposition~\ref{prop: pseudo-fixed point}, it is routine to show that when one has at least one coincidence and the other columns are group translations, $\sub$ also admits a pseudo-fixed point, so we omit the proof. 

\begin{theorem}\label{thm: pure point coincidence}
Let $\sub$ be a substitution of length $L$ on a compact Hausdorff abelian group $\A$  such that  $\sub(\theta)_{j}$ is either a group translation or is constant, for $0\leqslant j\leqslant L-1$, with at least one constant column. Then, for any $\chi\in\widehat{\A}$, the  weighted Dirac comb $\omega^{ }_{\chi}=\sum_{m\in\Z}\chi(w_m)\delta_m$ arising from a pseudo-fixed point $w$ has pure point diffraction. 
\end{theorem}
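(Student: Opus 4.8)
The plan is to prove pure point diffraction directly at the level of the autocorrelation, by showing that $\gamma$ is strongly almost periodic and then invoking Theorem~\ref{thm: epsilon almost periods}; the support $\Delta \subseteq \Z$ lies in a lattice and is therefore Meyer, so that theorem applies. Since $\chi$ takes values in $S^1$ we have $\eta(0) = 1$ and, by the triangle inequality applied to the defining Ces\`aro average, $|\eta(m)| \leqslant 1$ for all $m \in \Z$, whence $|\eta(m) - 1| \leqslant 2$. In view of the description of $P_\varepsilon$ in Eq.~\eqref{eq: set of epsilon almost periods}, it then suffices to exhibit, for every $\varepsilon > 0$, a relatively dense set of $m$ for which $|\eta(m) - 1|$ is arbitrarily small.

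The key step is to extract the analogue of the recurrence of Proposition~\ref{prop: bijective absence of ac} in the presence of a coincidence. Let $C \subseteq \{0, \ldots, L-1\}$ be the set of indices of the constant columns and put $t \coloneqq |C| \geqslant 1$. Writing $\varrho(w) = \sigma^s(w)$ for the pseudo-fixed point, the entries obey $w_{Lp + k - s} = w_p\,\beta_k$ when column $k$ is a translation and $w_{Lp+k-s} = \beta_k$, independently of $p$, when $k \in C$. Substituting $j = Lp + k - s$ into the defining average for $\eta(Lm)$ and grouping by the residue $k$: for a translation column the factors $\chi(\beta_k)\overline{\chi(\beta_k)} = 1$ cancel and one is left with $\chi(w_p)\overline{\chi(w_{p+m})}$, whose average over $p$ contributes $\tfrac{1}{L}\eta(m)$; for a coincidence column the summand is the constant $\chi(\beta_k)\overline{\chi(\beta_k)} = 1$, contributing $\tfrac{1}{L}$. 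Passing to the limit, with the boundary corrections $\varphi^\pm$ handled exactly as in Proposition~\ref{prop: bijective absence of ac}, yields the affine recurrence
\[
\eta(Lm) = \frac{L - t}{L}\,\eta(m) + \frac{t}{L}, \qquad m \in \Z.
\]

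Because $t \geqslant 1$, this is a strict contraction toward the fixed value $1$: subtracting $1$ from both sides gives $\eta(Lm) - 1 = \tfrac{L-t}{L}\big(\eta(m) - 1\big)$, and iterating $\ell$ times produces $|\eta(L^\ell m) - 1| = \big(\tfrac{L-t}{L}\big)^\ell |\eta(m) - 1| \leqslant 2\big(\tfrac{L-t}{L}\big)^\ell$. Given $\varepsilon > 0$, choose $\ell$ so large that $2\big(\tfrac{L-t}{L}\big)^\ell < \varepsilon^2$. Then $|\eta(0) - \eta(m)|^{1/2} < \varepsilon$ for every $m \in L^\ell \Z$, so $L^\ell \Z \subseteq P_\varepsilon$; as $L^\ell \Z$ is relatively dense in $\Z$, so is $P_\varepsilon$. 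By Theorem~\ref{thm: epsilon almost periods} the autocorrelation $\gamma$ is strongly almost periodic, whence $\widehat{\gamma_\chi}$ is pure point, as claimed.

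I expect the only genuine obstacle to be the careful derivation of the affine recurrence above --- in particular the bookkeeping of the shift $s$ and the boundary terms in the average, precisely the points already dealt with in Proposition~\ref{prop: bijective absence of ac}. Once the recurrence is established, the contraction argument is elementary, and the result is the natural compact-alphabet counterpart of Dekking's coincidence criterion; compare \cite[Prop.~8.6]{BG:book} for the period doubling case.
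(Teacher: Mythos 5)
Your proof is correct and takes essentially the same route as the paper: the identical affine recurrence $\eta(Lm)=\tfrac{L-p}{L}\,\eta(m)+\tfrac{p}{L}$ (your $t$ is the paper's $p$, derived with the same bookkeeping of the shift $s$ and boundary terms), the same appeal to Theorem~\ref{thm: epsilon almost periods}, and the same relatively dense set $L^{\ell}\Z\subseteq P_{\varepsilon}$ of $\varepsilon$-almost periods. Your one deviation is cosmetic but pleasant: rewriting the recurrence as the exact contraction $\eta(Lm)-1=\tfrac{L-t}{L}\bigl(\eta(m)-1\bigr)$ yields the uniform bound $|\eta(L^{\ell}m)-1|\leqslant 2\bigl(\tfrac{L-t}{L}\bigr)^{\ell}$ in one line, whereas the paper tracks the real and imaginary parts of $\eta(m)$ separately to obtain the same uniform convergence.
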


\begin{proof}
Without loss of generality, choose $s$ to be the minimal position with a coincidence and let $w\in X_\sub$ be such that $\sub(w)=\sigma^{s}(w)$. Suppose that there are $p$ coincident columns in $\sub$, which are at positions $\big\{c_1=s,c_2,\ldots,c_p\big\}$.

Consider $\eta(Lm)$, which can be written as
\[
\eta(Lm)
=\lim_{N\rightarrow\infty}\frac{1}{2N+1}\sum_{r=-s}^{L-1-s} \sum^{\big\lfloor\frac{N}{L}\big\rfloor+\varphi_r}_{j=-\big\lfloor\frac{N}{L}\big\rfloor-\varphi_r} \chi\left(w^{ }_{Lj+r}w^{-1}_{L(j+m)+r}\right),
\]
where $\varphi_r$ is the same as in Section~\ref{SEC:bijective}.
The constant columns will yield $p$ recurrence relations for $w_j$, which read $w_{Lm+c_i-s}=\beta_{c_i}$. For the remaining positions $0\leqslant k\leqslant L-1$, which are group translations, one has 
$w_{Lm+k-s}=w_m\beta_k$ as in the bijective case.
Taking these recurrences together yields  
\begin{align}
\eta(Lm)&=\lim_{N\rightarrow\infty}\frac{1}{2N+1}\left[p \sum^{\big\lfloor\frac{N}{L}\big\rfloor+\varphi_r}_{j=-\big\lfloor\frac{N}{L}\big\rfloor-\varphi_r} \chi(e)+(L-p) \sum^{\big\lfloor\frac{N}{L}\big\rfloor+\varphi_r}_{j=-\big\lfloor\frac{N}{L}\big\rfloor-\varphi_r} \chi(w^{ }_{j}w^{-1}_{j+m})\right] \nonumber \\ 
&=\frac{p}{L}+\frac{L-p}{L}\eta(m) \label{eq: general LM zero recursion}. 
\end{align}
The unitarity of $\chi$ implies that $\eta(0)=1$. To use Theorem~\ref{thm: epsilon almost periods}, we then need to show that 
\[
P_{\varepsilon}=\big\{m\in \Z\mid |1-\eta(m)|^{1/2}<\varepsilon\big\} 
\]
is relatively dense, for all $\varepsilon>0$. 
To this end, let $\eta(m)=a+\ii b \in \C$. Applying Eq.~\ref{eq: general LM zero recursion} iteratively yields
\[
\eta(L^{\ell}m)=\bigg(\frac{L-p}{L}a_{\ell-1}+\frac{p}{L}\bigg)+\ii\bigg(\frac{L-p}{L}\bigg)^{\ell}b,
\]
where $a_{\ell}=\frac{L-p}{L}a_{\ell-1}+\frac{p}{L}$ with $a_0=a$. Since $|\eta(m)|\leqslant 1$ for all $m$, one has $a,b\in[-1,1]$, implying that, for any fixed $m\in\Z$, 
$\mathfrak{Re}(\eta(L^{\ell}m))\rightarrow 1$ and $\mathfrak{Im}(\eta(L^{\ell}m))\rightarrow 0$
as $\ell\rightarrow\infty$, with uniform convergence in $[-1,1]$. 
Given  $\varepsilon>0$, and $z=L^{\ell}m$ for some $\ell\in\mathbb{N}$ and $m\in \Z$, one has 
\begin{align*}
|1-\eta(z)|^{1/2}&=\Bigg|\bigg(1-\frac{L-p}{L}a_{\ell-1}+\frac{p}{L}\bigg)-\ii\bigg(\frac{L-p}{L}\bigg)^{\ell}b\Bigg|^{1/2} \\
&\leqslant |\varepsilon^{(\ell)}_1+\varepsilon^{(\ell)}_2|^{1/2}\leqslant \sqrt{2}\max\Bigg\{\sqrt{\varepsilon^{(\ell)}_1},\sqrt{\varepsilon^{(\ell)}_2}\Bigg\}.
\end{align*}
One can then choose $\ell_0\in\mathbb{N}$ such that for all $\ell>\ell_0$, $\max\Big\{\sqrt{\varepsilon^{(\ell)}_1},\sqrt{\varepsilon^{(\ell)}_2}\Big\}<\frac{\varepsilon}{\sqrt{2}}$. 
Since this choice of $\ell_0$ can be made to cover all possible values of $a,b\in[-1,1]$, one has $
|1-\eta(z)|^{1/2}<\varepsilon$
for the elements of the set $\mathcal{Z}^{(\ell_0)}\coloneqq \big\{z\in\Z\mid z=L^{\ell}m,\, m\in\Z,\,\ell\geqslant \ell_0\big\}$.
 Clearly, $\mathcal{Z}^{(\ell_0)}$ is relatively dense in $\Z$, and hence $\widehat{\gamma^{ }_{\chi}}$ is pure point by Theorem~\ref{thm: epsilon almost periods}. 
\end{proof}

The next result follows from \cite[Thm.~7]{BaakeLenz}, which establishes the equivalence of a measure-preserving dynamical system $(X,S,\mu)$ having pure point dynamical spectrum and the diffraction being pure point (which in fact holds without any ergodicity assumptions on $\mu$). 

\begin{coro}
Let $\sub$ be as in Theorem~\textnormal{\ref{thm: pure point coincidence}} such that one of the columns is a group translation with dense orbit in $\A$. Then $\sub$ is primitive and $(X_\sub,\sigma)$ is strictly ergodic with pure point dynamical spectrum.  \qed
\end{coro}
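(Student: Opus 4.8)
The plan is to establish the three assertions---primitivity, strict ergodicity, and pure point dynamical spectrum---in that order, each feeding into the next. The substitution is of constant length $L$ on the compact abelian group $\A$, with columns that are either translations $a\mapsto a\beta_j$ or constants, at least one of each, and at least one translation column $\varrho^{ }_{j^\ast}\colon a\mapsto a\beta$ of infinite order.

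First, for primitivity I would track the letters that occur in $\varrho^n(\theta)$. Writing the position in base $L$ as digits $(d_1,\dots,d_n)$, the corresponding letter is the composition $(\varrho^{ }_{d_n}\circ\cdots\circ\varrho^{ }_{d_1})(\theta)$; because a constant column wipes out all earlier dependence, a position whose first digit selects the coincidence column (value $\beta_c$) and whose remaining digits all select the infinite-order column contributes the letter $\beta_c\,\beta^{n-1}$, which is independent of $\theta$. More generally, resetting at a coincidence and then applying arbitrary translation columns produces the $\theta$-independent set $\beta_c\cdot S_{n-1}$, where $S_m$ denotes the set of $m$-fold products of translation columns. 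The key structural input is that a closed subsemigroup of a compact group is a subgroup; hence the closure of $\bigcup_m S_m$ is the closed subgroup $\mc K$ generated by the translation columns, which contains the infinite compact monothetic group $\overline{\langle\beta\rangle}$ and, since the columns generate $\A$, equals $\A$. Consequently, for every open $U\subseteq\A$ there is an $n$ so that $\beta_c\cdot S_{n-1}$ meets $U$; as these letters do not depend on $\theta$, the same $n$ works for all $\theta$ simultaneously, which is exactly primitivity.

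Second, with primitivity in hand, minimality of $(X_\varrho,\sigma)$ is immediate from Theorem~\ref{thm:tile-lengths}(1). For unique ergodicity I would invoke Theorem~\ref{thm:unique-erg-CL}: fixing a translation-invariant metric on the compact group $\A$, each translation column is an isometry and each constant column collapses all distances to zero, so the semigroup generated by the columns is uniformly equicontinuous. Primitivity together with this equicontinuity yields unique ergodicity, and hence strict ergodicity. For the spectral conclusion I would then appeal to Theorem~\ref{thm: pure point coincidence}, which gives that $\omega^{ }_\chi$ has pure point diffraction for every character $\chi\in\widehat{\A}$. Since $\A$ is a compact abelian group, its characters separate points and their linear span is dense in $C(\A)$ by Stone--Weierstrass, so the observables $w\mapsto\chi(w_0)$ together with their shifts generate a dense subalgebra of $C(X_\varrho)$ and thus a complete family. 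The equivalence of pure point diffraction and pure point dynamical spectrum \cite[Thm.~7]{BaakeLenz} then upgrades pure pointedness from this complete family to the full maximal spectral type of $U$, giving pure point dynamical spectrum.

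The main obstacle is the primitivity step: one must verify that the translation columns generate a dense subgroup of $\A$---equivalently, that resetting at a coincidence and then iterating fills $\A$ densely---which is precisely where the infinite-order hypothesis (ensuring $\overline{\langle\beta\rangle}$ is an infinite, hence genuinely compact, monothetic group that exhausts $\A$) and the fact that a closed subsemigroup of a compact group is a subgroup both enter. The secondary point requiring care is confirming that the family of character observables is rich enough to capture the entire maximal spectral type, so that the correspondence of \cite{BaakeLenz} legitimately transfers pure pointedness from the diffraction side to the dynamical side.
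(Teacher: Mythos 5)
Your overall route coincides with the paper's (which is only sketched there): pure point diffraction from Theorem~\ref{thm: pure point coincidence} is transferred to pure point dynamical spectrum via \cite[Thm.~7]{BaakeLenz}; minimality comes from primitivity via Theorem~\ref{thm:tile-lengths}; and unique ergodicity comes from Theorem~\ref{thm:unique-erg-CL} because the columns (translations and constants) generate an equicontinuous semigroup. Your added detail on the completeness of the character observables (characters separate points, Stone--Weierstrass, the span of eigenfunctions is an algebra) is exactly the point the paper leaves implicit, and it is sound.

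The genuine problem is in your primitivity step. The coincidence-plus-semigroup argument is fine as far as it goes: the $\theta$-independent letters of $\varrho^n(\theta)$ fill up $\beta_c\cdot\mc{K}$, where $\mc{K}$ is the closed subgroup generated by the translation columns (using that a closed subsemigroup of a compact group is a group). But your conclusion needs $\mc{K}=\A$, and you justify this in two incompatible ways: once by writing ``since the columns generate $\A$'', which is not among the hypotheses, and once by claiming that the infinite-order hypothesis forces $\overline{\langle\beta\rangle}$ to exhaust $\A$. The latter deduction is false in a general compact abelian group: take $\A=S^{1}\times C_2$ with $C_2=\{e,g\}$, constant column with value $(1,e)$, and translation column by $\beta=(\alpha,e)$ with $\alpha$ irrational. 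Every hypothesis of the corollary holds and $\beta$ has infinite order, yet $\overline{\langle\beta\rangle}=S^{1}\times\{e\}\neq\A$, and indeed $\varrho$ is not primitive: no letter of any $\varrho^{n}\big((1,e)\big)$ ever lies in the open set $S^{1}\times\{g\}$. So infinite order alone cannot deliver primitivity; what is actually needed is that the translation columns topologically generate $\A$. This is automatic when $\A=S^{1}$ (infinite order means irrational rotation, as in Example~\ref{ex: pure point 1d}), which is evidently the situation the corollary has in mind, and under that reading both the statement and your argument become correct. To be fair, the paper asserts primitivity with no proof at all, so once the density hypothesis is made explicit your argument is a genuine completion of the paper's sketch rather than a deviation from it. A minor further point: Theorem~\ref{thm:unique-erg-CL} is stated for metrisable alphabets, so your appeal to a translation-invariant metric on $\A$ tacitly assumes metrisability; the paper shares this gloss.
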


Since $\sub$ has a coincidence, say $\sub_j(a)=\beta$ and has a group rotation $\sub_{k}(a)=\alpha\theta$ with dense orbit as another column, given any open set $U\subset \A$, one can find a power $n\in \mathbb{N}$ such that $(\sub_j)^{n}(\beta)$ lies in $U$. This proves that $\sub$ is primitive.
Here, minimality follows from primitivity and unique ergodicity follows again from Theorem~\ref{thm:unique-erg-CL} since the columns of $\sub$ generate an equicontinuous semigroup.

\begin{example}\label{ex: pure point 1d}
Consider the substitution $\sub^{ }_2\colon \A\to \A^{+}$
\[
\sub^{ }_2\colon [\theta] \mapsto [\theta] \hspace{2mm}[1] \hspace{2mm} [\theta\alpha]\hspace{2mm} [\theta],
\]
where $\A = S^1$, and the corresponding bi-infinite fixed point $w$ associated with the legal seed $1|1$. 
For this example, the recurrence relations are given by
\begin{align*}
w_{4m}&=w_m        &  w_{4m+2}&=\alpha w_{m}   \nonumber\\
w_{4m+1}&=1    &   w_{4m+3}&=w_m.  \label{eq: recursion zero}
\end{align*}
As before,  we assign the weight $\charn(\theta)=\theta^n$ to each point of type $\theta$. Theorem~\ref{thm: pure point coincidence} tells us that the corresponding diffraction for $\sub^{ }_2$ is pure point. \exend
\end{example}

\begin{remark}
Substitutions covered in Theorem~\ref{thm: pure point coincidence} generate sequences exhibiting a generalised Toeplitz structure; see \cite[Sec.~4.3]{EG:almost-minimal} and \cite{W:Toeplitz} on generalised Toeplitz sequences over compact alphabets. \exend
\end{remark}

\subsection{A substitution with countably infinite Lebesgue and countably infinite singular continuous spectral components}\label{sec:spin}

Here, we provide a generalisation of the Rudin--Shapiro substitution on infinite alphabets. 
Let $\A=G\times \mathcal{D}$, with $G=S^{1}$ and $\mathcal{D}=\left\{\mathsf{0},\mathsf{1}\right\}$. Let $\alpha\in S^{1}$ be an irrational rotation and consider $\sub\colon \A\to \A^{+}$ given by 
\begin{equation}\label{eq:spin sub}
\sub\colon \begin{cases}
(\theta,\mathsf{0}) \mapsto (\theta,\mathsf{0})\, (-\theta,\mathsf{1}) & \\
(\theta,\mathsf{1}) \mapsto (\theta\alpha,\mathsf{0})\, (\theta\alpha,\mathsf{1}). \\
\end{cases} 
\end{equation}
%We call such a substitution a \emph{spin substitution}.
Such a substitution is called a \emph{spin substitution} by Frank and Ma\~nibo \cite{FM:spin}.
For $\mathsf{a}=(\theta,\mathsf{d})\in \A$, we call $\pi_{G}(\mathsf{a})=\theta$ its \emph{spin} and $\pi_{\mathcal{D}}(\mathsf{a})=\mathsf{d}$ its \emph{digit}. 
The distribution of the spins in the level-1 substituted words are determined by the spin matrix
\[
W=\begin{pmatrix}
1 & -1\\
\alpha & \alpha
\end{pmatrix},
\]
which satisfies $W_{ij}=\pi_{G}(\sub(1,i)_j)$ for $i,j\in\mathcal{D}$.
Primitivity follows from the irrationality of $\alpha$. 
Note that this substitution is recognisable because the image of any letter with digit $\mathsf{1}$ has the same spin for its first and second letter. One can show that the associated subshift is measure-theoretically isomorphic to an $S^1$-extension of the dyadic odometer, i.e., $(X_\sub,\Z,\mu)\simeq(\Z_2\times S^1,\Z,\nu\times\mu_{\text{H}})$, where $\nu$ and $\mu_{\text{H}}$ are the Haar measures on $\Z_2$ and $S_1$, respectively. The skew product which induces the $\Z$-action can be derived directly from $W$; see \cite[Thm.~3.11]{FM:spin}. 
It follows from the classical theory of group extensions that one has the induced splitting $L^{2}(X_\sub,\mu)=\bigoplus_{\chi\in\widehat{S^{1}}} H_{\chi}$
with $H_{\chi}=L^{2}(\Z_2,\nu)\otimes \chi$. The subspaces $H_{\chi}$ are translation-invariant. 
The spectral type of functions in $H_{\chi}$ is determined by $\chi(W):=(\chi(W_{ij}))^{ }_{i,j}\in \text{Mat}(2,S^1)$ via the following result. 

\begin{theorem}[{\cite[Thm.~3.6]{FM:spin}}]\label{thm:spin-general}
Let $\sub$ be a primitive and recognisable spin substitution on $\A=G\times \mathcal{D}$ with spin map $W$, where $G$ is a compact abelian group.
\begin{enumerate}
\item If $\frac{1}{\sqrt{|\mathcal{D}|}}\chi(W)$ is a unitary matrix,  $H_{\chi}$ has Lebesgue spectral type of multiplicity $|\mathcal{D}|$. 
\item If $\chi(W)$ is a rank-1 matrix,  $\chi$ induces a factor map onto a bijective abelian substitution $\sub'$. Moreover, the maximal spectral type of $H_{\chi}$ is either pure point or purely singular continuous, and is absolutely continuous to the maximal spectral type of $\sub'$. \qed 
\end{enumerate} 
\end{theorem}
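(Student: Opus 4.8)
The plan is to exploit the group-extension structure $L^2(X_\varrho,\mu)=\bigoplus_{\chi\in\widehat{S^1}}H_\chi$ and to analyse the Koopman operator one isotypic component at a time. On $H_\chi\cong L^2(\Z_2,\nu)\otimes\chi$ the shift acts as a $\chi$-twisted odometer, i.e.\ a weighted unitary $U_\chi f(x)=\chi(\psi(x))\,f(x+1)$, where $\psi\colon\Z_2\to S^1$ is the $G$-cocycle coming from the skew product attached to $W$. Because the odometer is self-similar under passing from level-$n$ to level-$(n+1)$ supertiles, the matrix of correlation measures $\widehat{\gamma}_\chi$ on $\mathbb T$ (indexed by the $|\mathcal D|$ digits, which play the role of the internal coordinate of $H_\chi$) satisfies a substitution renormalisation of the form
\[
\widehat{\gamma}_\chi(t)=\tfrac{1}{|\mathcal D|}\,B_\chi(t)^\ast\,\widehat{\gamma}_\chi(|\mathcal D|\,t)\,B_\chi(t),
\]
where $B_\chi(t)$ is the Fourier matrix whose entries are built from the scalars $\chi(W_{ij})$ together with the position phases $\ee^{2\pi\ii j t}$. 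Iterating this identity expresses $\widehat{\gamma}_\chi$ as a matrix Riesz product driven by $\chi(W)$, and the two cases of the theorem correspond to the two extreme behaviours of this matrix cocycle. Setting up this renormalisation rigorously in the compact-alphabet setting is the first step.

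For part (1), I would take the hypothesis in the form $\chi(W)^\ast\chi(W)=|\mathcal D|\,I$ (so that $\chi(W)$ is, up to the scalar $|\mathcal D|^{1/2}$, unitary). The Rudin--Shapiro mechanism then forces the position phases to conspire so that $B_\chi(t)^\ast B_\chi(t)$ is the constant matrix $|\mathcal D|\,I$ for every $t$. Substituting this into the renormalisation shows that the matrix Riesz product collapses to the constant matrix $I$, whence $\widehat{\gamma}_\chi=\lambda_{\mathbb T}\otimes I$, Lebesgue measure tensored with the identity. Since the density is bounded above and below and the matrix is $|\mathcal D|\times|\mathcal D|$, each of the $|\mathcal D|$ internal directions contributes one Lebesgue component, and the maximal spectral type of $H_\chi$ is Lebesgue of uniform multiplicity $|\mathcal D|$.

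For part (2), suppose $\chi(W)$ has rank one, so $\chi(W_{ij})=\phi_i\kappa_j$ for scalars $\phi_i,\kappa_j\in S^1$. Absorbing the factors $\phi_i$ into a coboundary shows that the $\chi$-reduction of $\varrho$ is conjugate to the constant-length substitution $\varrho'$ on the compact abelian group $\chi(G)\leqslant S^1$ whose columns are the translations by the $\kappa_j$; this is precisely a factor map onto a bijective abelian substitution. Under this factor map a generating element of $H_\chi$ maps to the weighted Dirac comb of a pseudo-fixed point of $\varrho'$, so $\rho^{H_\chi}_{\max}$ is absolutely continuous with respect to the diffraction maximal type of $\varrho'$. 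By Proposition~\ref{prop: bijective absence of ac} this measure has no absolutely continuous part, and by Theorem~\ref{thm: cyclic periodic} it is pure point exactly when $\varrho'$ is of the cyclic-periodic form \eqref{eq: cyclic periodic} and purely singular continuous otherwise, which yields the stated dichotomy.

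I expect the main obstacle to be part (1): making the matrix Riesz product rigorous in the infinite-alphabet, group-extension context and identifying it with genuine Lebesgue measure rather than merely an absolutely continuous one. The delicate points are the correct bookkeeping of the position phases in $B_\chi(t)$ (the cancellation underlying the unitarity hypothesis), the convergence of the infinite matrix product, and confirming that the multiplicity is exactly $|\mathcal D|$. By contrast, the rank-one case is comparatively routine once the factor map onto $\varrho'$ is identified, since it reduces immediately to the bijective dichotomy already established in Theorem~\ref{thm: cyclic periodic} and Proposition~\ref{prop: bijective absence of ac}.
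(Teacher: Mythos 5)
First, a point of comparison: the paper does not prove Theorem~\ref{thm:spin-general} at all --- it is imported verbatim from \cite{FM:spin} (hence the \textup{qed} symbol in the statement), so the only meaningful benchmark is the proof in that source. Your two mechanisms are the right ones and do match the strategy there: the Rudin--Shapiro cancellation / matrix Riesz product collapse in the Hadamard case, and the reduction to a bijective abelian substitution with columns $\kappa_j$ in the rank-one case. Your silent renormalisation of the hypothesis to $\chi(W)^\ast\chi(W)=|\mathcal{D}|\,I$ (rather than the literal ``$\tfrac{1}{|\mathcal{D}|}\chi(W)$ unitary'') is also the correct reading, and the key computation behind part (1) is sound: in $B_\chi(t)^\ast B_\chi(t)$ the position phases $\ee^{2\pi\ii jt}$ factor out of each column inner product, so the Hadamard condition gives $B_\chi(t)^\ast B_\chi(t)=|\mathcal{D}|\,I$ for \emph{every} $t$.

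There is, however, a genuine gap, and it affects both parts (and is precisely where you declare part (2) ``routine''): everything you establish concerns spectral measures of \emph{specific} functions --- the $|\mathcal{D}|\times|\mathcal{D}|$ digit-correlation matrix in part (1), and the pullback of the weighted comb of $\varrho'$ in part (2) --- whereas the theorem asserts the spectral type, and multiplicity, of the \emph{entire} isotypic component $H_\chi$. The missing bridge is Helson's purity theorem for compact abelian group extensions (the reference \cite{H:cocycle}, which this paper itself invokes to deduce Theorem~\ref{thm:spin-dynam} from Proposition~\ref{prop:spin-diff}): the restriction of the Koopman operator to each $H_\chi$ is spectrally pure. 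Equivalently, $H_\chi=L^2(\Z_2,\nu)\otimes\chi$ is spanned by $e_\lambda\otimes\chi$ over the odometer eigenfunctions $e_\lambda$, and since $U^n(e_\lambda g)=\lambda^n e_\lambda\,U^n g$ with $|e_\lambda|=1$, their spectral measures are rotations of one another, so the type of one generator propagates to the whole component. Without this input, your part (2) does not deliver the dichotomy: the maximal spectral type of the factor $X_{\varrho'}$ is itself \emph{mixed} (it always contains the pure point part coming from the underlying odometer), so knowing $\rho^{H_\chi}_{\max}\ll\rho^{\varrho'}_{\max}$ together with one purely singular continuous function in $H_\chi$ does not exclude a mixed $H_\chi$. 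Moreover, the absolute-continuity claim itself requires identifying \emph{all} of $H_\chi$ with the pullback of the corresponding isotypic subspace of the factor; your phrase ``a generating element of $H_\chi$'' assumes exactly the cyclicity that needs proof (or that purity replaces). Likewise in part (1), the $|\mathcal{D}|$ mutually orthogonal Lebesgue components you construct need not a priori exhaust $H_\chi$ --- there are functions in $H_\chi$, e.g.\ products of $\chi$-twisted coordinates over several positions, outside the closed $U$-invariant span of the digit functions --- so neither ``Lebesgue type for all of $H_\chi$'' nor ``multiplicity exactly $|\mathcal{D}|$'' follows from the Riesz-product collapse alone; the same group-extension decomposition is needed to finish.
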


\begin{prop}\label{prop:spin-diff}
Let $\sub$ be the spin substitution in Eq.~\eqref{eq:spin sub}. Let $\charn\in \widehat{S^{1}}\simeq \Z$, with $\charn(z)=z^n$. Consider the weighted Dirac comb $\omega^{ }_{\charn}=\sum_{m\in\Z} \charn(\pi_{G}(w_m))\delta_m$, where $w\in X_\sub$. Let $\widehat{\gamma}^{(n)}$ be the  diffraction measure of $\omega^{ }_{\charn}$. 
Then one has the following.
\begin{enumerate}
\item If $n=0$, the diffraction $\widehat{\gamma}^{(n)}=\delta_{\mathbb{Z}}$ and hence pure point.
\item If $n\in 2\Z+1$, the diffraction 
$\widehat{\gamma}^{(n)}$ is Lebesgue measure.
\item If $n\in 2\Z\setminus\left\{0\right\}$, the diffraction $\widehat{\gamma}^{(n)}$ is purely singular continuous and is a generalised Riesz product.
\end{enumerate}
\end{prop}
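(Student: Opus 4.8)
The plan is to dispatch the three cases by evaluating the spin matrix under $\charn$ and feeding the outcome into Theorem~\ref{thm:spin-general}, then transporting the resulting spectral type from the invariant subspace $H_{\charn}$ to the diffraction $\widehat{\gamma}^{(n)}$. The bridge is the diffraction--dynamical correspondence (cf.\ \cite{BaakeLenz}): the weight $\charn\circ\pi_{G}$ on the $0$th coordinate defines a function $f\in H_{\charn}$ whose spectral measure is exactly $\widehat{\gamma}^{(n)}$ and which is of maximal type in $H_{\charn}$, so that the spectral classification of $H_{\charn}$ determines the spectral type of $\widehat{\gamma}^{(n)}$. First I would record the elementary computation
\[
\charn(W)=\begin{pmatrix} 1 & (-1)^n \\ \alpha^n & \alpha^n \end{pmatrix},
\]
and observe that $\charn(W)\,\charn(W)^{\ast}=2\,\Id$ precisely when $(-1)^n=-1$, i.e.\ when $n$ is odd, while for $n$ even the two columns coincide so that $\charn(W)$ has rank $1$. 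Case (1) is then immediate: for $n=0$ the character is trivial on $S^{1}$, so $\omega^{ }_{\chi_0}=\sum_{m\in\Z}\delta_m=\delta_{\Z}$, and Poisson summation gives $\widehat{\gamma}^{(0)}=\widehat{\delta_{\Z}}=\delta_{\Z}$, which is pure point.

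For case (2), with $n$ odd, the identity $\charn(W)\,\charn(W)^{\ast}=2\,\Id$ says that $\tfrac{1}{\sqrt{2}}\charn(W)$ is unitary, so Theorem~\ref{thm:spin-general}(1) gives that $H_{\charn}$ has Lebesgue spectral type; through the correspondence above, $\widehat{\gamma}^{(n)}$ is absolutely continuous and equivalent to Lebesgue measure. To upgrade this to the assertion that $\widehat{\gamma}^{(n)}$ \emph{is} Lebesgue measure (a constant density), I would compute the autocorrelation directly: splitting the defining average of $\eta(m)$ according to the parity of the index and substituting the spin recurrences $s^{ }_{2m+j}=W^{ }_{d_m,j}\,s_m$, one obtains a digit-refined renormalisation in which the cross-correlations decouple exactly because the columns of $\charn(W)$ are orthogonal, forcing $\eta(m)=\delta^{ }_{m,0}$. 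Hence $\gamma=\delta_0$ and $\widehat{\gamma}^{(n)}$ is Lebesgue measure.

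For case (3), with $n$ even and nonzero, $\charn(W)$ has rank $1$, so Theorem~\ref{thm:spin-general}(2) produces a bijective abelian factor $\varrho'$ and tells us the maximal spectral type of $H_{\charn}$ is either pure point or purely singular continuous (in particular it has no absolutely continuous part). To exclude the pure point alternative I would apply the dichotomy of Theorem~\ref{thm: cyclic periodic}: since $\alpha$ is irrational and $n\neq 0$, the rotation $\alpha^n$ is irrational, so the spins carried by $\varrho'$ take infinitely many values in $S^{1}$; by the remark following Theorem~\ref{thm: cyclic periodic} this gives $|\eta(1)|<1$, which rules out pure point (and, via Proposition~\ref{prop: bijective absence of ac}, absolutely continuous) diffraction and leaves $\widehat{\gamma}^{(n)}$ purely singular continuous. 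Its presentation as a generalised Riesz product follows by iterating the homogeneous renormalisation of $\eta$ (which is $2$-regular in the sense of the remark after Proposition~\ref{prop: bijective absence of ac}); in the rank-one regime this telescopes into a scalar Riesz product exactly as in the finite-alphabet bijective setting, cf.\ \cite{BG-squiral,Bartlett}.

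The step I expect to be the main obstacle is the exact identification in case (2): Theorem~\ref{thm:spin-general} yields only the spectral \emph{type}, and pinning the absolutely continuous diffraction down to Lebesgue measure rather than merely an equivalent measure of possibly non-constant density requires the explicit correlation computation, in which the coupling between the even- and odd-indexed sub-averages must be resolved using the full orthogonality of $\charn(W)$ (equivalently the unitarity of $\tfrac{1}{\sqrt 2}\charn(W)$), not just the weaker relations available for a general column set. A secondary technical point is to verify that the factor $\varrho'$ of case (3) genuinely satisfies the hypotheses of Theorem~\ref{thm: cyclic periodic}, namely that the induced weight on $\varrho'$ is a bona fide character taking the infinitely many values $\alpha^{nk}$, so that the infinitely-many-values criterion indeed applies.
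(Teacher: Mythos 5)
Your proposal is correct and follows essentially the same route as the paper's proof: evaluate $\charn(W)$, use Theorem~\ref{thm:spin-general}(1) for odd $n$ (with the normalisation $\tfrac{1}{\sqrt{2}}\charn(W)$ unitary, as you correctly compute), and for even $n\neq 0$ pass to the bijective factor $[\theta]\mapsto[\theta]\,[\theta\alpha^n]$ and invoke Theorem~\ref{thm: cyclic periodic}, with the generalised Riesz product arising from the recursions $w_{2m}=w_m$, $w_{2m+1}=\alpha^n w_m$. The only divergence is in case (2), where the paper simply records $\widehat{\gamma}^{(n)}=\rho_f\ast\delta_{\Z}$ for $f(x)=\charn(\pi_G(x_0))\in H_{\charn}$ and cites Theorem~\ref{thm:spin-general} for the Lebesgue conclusion, deferring the exact constant-density identification to \cite{FM:spin}; your proposed direct Rudin--Shapiro-style computation showing $\eta(m)=\delta_{m,0}$ fills in precisely that deferred step.
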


\begin{proof}
The first claim is straightforward. 
Before we proceed, we note that $\widehat{\gamma}^{(n)}=\rho_f\ast \delta_{\Z}$, where $\rho_f$ is the spectral measure of the function $f\colon x\mapsto \charn(\pi_{G}(x_0))$, which is always in $H_{\charn}$. 
To prove the second claim, note that $\frac{1}{\sqrt{2}}\charn(W)$ is  a unitary matrix whenever $n$ is an odd integer. From Theorem~\ref{thm:spin-general}, $H_{\chi}$ contains only functions whose spectral measures are absolutely continuous with respect to Lebesgue measure. 
For the third claim, the matrix $\chi(W)$ is rank-1
whenever $n$ is even. For $n\neq 0$, the character  $\charn$ induces a factor map from $X_\sub$ onto $(S^1)^{\Z}$ which identifies the letters in $\left\{(\zeta \alpha^n,\mathsf{0})\right\}$ with $(1,\mathsf{1})$, where $\zeta$ is an $n$th root of unity. The image of $X_\sub$ under $\charn$ can be realised as the subshift of the bijective substitution on $S^{1}$ given by
$[\theta] \mapsto [\theta]\, [\theta \alpha^n]$,
which is primitive (since $\alpha$ is irrational) and recognisable. By Theorem~\ref{thm: cyclic periodic}, the diffraction measure 
 $\widehat{\gamma}^{(n)}$ is purely singular continuous. The corresponding generalised Riesz product is given by $\rho=\prod_{m\geqslant 0} \frac{1}{2}|1+\alpha^n\ee^{2^{m+1}\pi\ii t}|^2$, seen as a weak-$\ast$ limit of absolutely continuous measures on $\mathbb{T}$, which arises from the relations $w_{2m}=w_m$ and $w_{2m+1}=\alpha^n w_m$; see \cite[Prop.~4.13]{Queffelec}.
\end{proof}

\begin{theorem}\label{thm:spin-dynam}
Let $\sub$ be the spin substitution in  Eq.~\eqref{eq:spin sub}. Consider the function space  $L^{2}(X_\sub,\mu)=\bigoplus_{\charn\in\widehat{S^{1}}} H_{\charn}$. Let $\rho^{(n)}_{\max}$ denote the maximal spectral type of $H_{\charn}$. 
\begin{enumerate}
\item If $n=0$,  $\rho^{(n)}_{\max}$ is pure point.
\item If $n\in 2\Z+1$, $\rho^{(n)}_{\max}$ is Lebesgue measure. 
\item If $n\in 2\Z\setminus\left\{0\right\}$, $\rho^{(n)}_{\max}$  is purely singular continuous.
\end{enumerate}
\end{theorem}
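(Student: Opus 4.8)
The plan is to transfer the diffraction-spectral results of Proposition~\ref{prop:spin-diff} up to the level of the maximal spectral type of each invariant subspace $H_{\charn}$, using the structure theory for spin substitutions in Theorem~\ref{thm:spin-general}. The key conceptual point is that for each fixed $n$ the subspace $H_{\charn}$ is translation-invariant, so it carries a well-defined maximal spectral type $\rho^{(n)}_{\max}$, and the results of Theorem~\ref{thm:spin-general} describe $H_{\charn}$ intrinsically through the matrix $\charn(W)$ rather than through a single correlation measure. I would organise the proof by the same trichotomy on $n$ as in the proposition.

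First I would dispose of the cases $n=0$ and $n$ odd directly from Theorem~\ref{thm:spin-general}. For $n=0$ the character is trivial, so $H_{\charn}=L^{2}(\Z_2,\nu)\otimes\chi_0$ is (isomorphic to) the $L^2$-space of the dyadic odometer, which is an equicontinuous (group rotation) system and therefore has pure point spectrum; this gives $\rho^{(0)}_{\max}$ pure point. For $n\in 2\Z+1$, one checks that $\tfrac{1}{2}\charn(W)$ is unitary (the two columns of $\charn(W)$ are orthogonal in $\C^2$ precisely when $n$ is odd), so part~(1) of Theorem~\ref{thm:spin-general} applies and $H_{\charn}$ has Lebesgue spectral type of multiplicity $|\mathcal D|=2$; in particular $\rho^{(n)}_{\max}$ is Lebesgue measure.

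The main work, and the case I expect to be the real obstacle, is $n\in 2\Z\setminus\{0\}$. Here $\charn(W)$ is rank one, so part~(2) of Theorem~\ref{thm:spin-general} tells us that $\charn$ induces a factor map onto the bijective abelian substitution $\varrho'\colon[\theta]\mapsto[\theta]\,[\theta\alpha^n]$ on $S^1$, and that the maximal spectral type of $H_{\charn}$ is either pure point or purely singular continuous and is absolutely continuous with respect to the maximal spectral type of $\varrho'$. The remaining task is to rule out the pure point alternative and to show that the maximal spectral type of $\varrho'$ is itself purely singular continuous. For the latter I would invoke the diffraction computation already carried out in Proposition~\ref{prop:spin-diff}(3): since $\alpha$ is an irrational rotation, $\alpha^n\neq 1$ for $n\neq 0$, so $\charn\circ\pi_G$ takes infinitely many values along $w$ and $|\eta(1)|<1$ by Lemma~\ref{lem: eta different column} (equivalently, the substitution $\varrho'$ is not of the finite cyclic form~\eqref{eq: cyclic periodic}); hence by Theorem~\ref{thm: cyclic periodic} the diffraction $\widehat{\gamma}^{(n)}$, and thus the maximal spectral type of $\varrho'$, is purely singular continuous with no pure point and no absolutely continuous part.

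Finally I would bridge from diffraction back to the spectral measure so that the conclusion is about $\rho^{(n)}_{\max}$ and not merely about $\widehat{\gamma}^{(n)}$. The link is the identity noted in the proof of Proposition~\ref{prop:spin-diff}, namely $\widehat{\gamma}^{(n)}=\rho_f\ast\delta_{\Z}$ where $f\colon x\mapsto\charn(\pi_G(x_0))$ is a cyclic generator of $H_{\charn}$; convolving with $\delta_{\Z}$ only periodises the measure on $\mathbb T$ and does not change its spectral type, so $\rho_f$ is purely singular continuous. Because $f$ generates $H_{\charn}$ as a translation-invariant subspace (this is where I would appeal to the group-extension structure $H_{\charn}=L^2(\Z_2,\nu)\otimes\charn$, in which the cyclic subspace of $f$ under the Koopman operator is all of $H_{\charn}$, matching the multiplicity-one singular case of Theorem~\ref{thm:spin-general}(2)), its spectral measure is the maximal spectral type, giving $\rho^{(n)}_{\max}=\rho_f$ purely singular continuous. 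The delicate point to get right is this last generation/maximality claim: one must confirm that in the rank-one case the whole subspace $H_{\charn}$ is spectrally subordinate to the single correlation measure coming from $\charn$, which is exactly the content of the ``maximal spectral type is absolutely continuous to that of $\varrho'$'' clause of Theorem~\ref{thm:spin-general}(2), combined with the singular continuity just established.
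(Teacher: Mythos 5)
Your overall strategy (transferring Proposition~\ref{prop:spin-diff} to $\rho^{(n)}_{\max}$ via the group-extension decomposition $L^{2}(X_{\varrho},\mu)=\bigoplus_{\charn} H_{\charn}$) is the same as the paper's, and your treatment of $n=0$ and $n$ odd is correct. But the case $n\in 2\Z\setminus\{0\}$, which you rightly identify as the crux, contains a genuine error and an unjustified claim. The error: the maximal spectral type of the factor substitution $\varrho'\colon[\theta]\mapsto[\theta]\,[\theta\alpha^n]$ is \emph{not} purely singular continuous. The system $X_{\varrho'}$ is a primitive, aperiodic, constant-length-$2$ substitution system, so it factors onto the dyadic odometer; the constant function and the eigenfunctions pulled back from that factor contribute a nontrivial pure point component to its maximal spectral type. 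Theorem~\ref{thm: cyclic periodic} gives much less than you use: it controls only the diffraction $\widehat{\gamma^{ }_{\chi}}$ of the character-weighted Dirac comb, equivalently the spectral measure of the single function $x\mapsto\chi(\pi_G(x_0))$, not the whole of $L^{2}(X_{\varrho'})$. Consequently your deduction ``$\rho^{(n)}_{\max}$ is absolutely continuous with respect to the maximal spectral type of $\varrho'$, the latter is purely singular continuous, hence $\rho^{(n)}_{\max}$ has no atoms'' collapses: the dominating measure does have atoms, so the absolute-continuity clause of Theorem~\ref{thm:spin-general}(2) cannot by itself rule out the pure point alternative. Your fallback, that $f_n(x)=\charn(\pi_G(x_0))$ is a cyclic vector for $H_{\charn}$ so that $\rho^{(n)}_{\max}=\rho_{f_n}$, is also unjustified: nothing in Theorem~\ref{thm:spin-general}(2) asserts simplicity of the spectrum on $H_{\charn}$ or cyclicity of this particular vector, and the statement being proved does not require it.

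The repair is short, and it is what the paper does. Since $f_n\in H_{\charn}$, one has $\rho_{f_n}\ll\rho^{(n)}_{\max}$ automatically, with no cyclicity needed. By Proposition~\ref{prop:spin-diff}(3), via the periodisation identity $\widehat{\gamma}^{(n)}=\rho_{f_n}\ast\delta_{\Z}$ that you state correctly, $\rho_{f_n}$ is nonzero and purely singular continuous. Now invoke purity of the restricted Koopman operator: either through the dichotomy already contained in Theorem~\ref{thm:spin-general}(2) (the maximal spectral type of $H_{\charn}$ is pure point or purely singular continuous), or, as the paper does, through the classical result for compact abelian group extensions \cite{H:cocycle} that the restriction of the Koopman operator to each $H_{\charn}$ is spectrally pure. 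A nonzero continuous measure cannot be absolutely continuous with respect to a pure point measure, so the pure point alternative is excluded and $\rho^{(n)}_{\max}$ is purely singular continuous. In short: spectral purity plus the spectral type of \emph{one} function in $H_{\charn}$ suffices; no statement about the full maximal spectral type of $\varrho'$, and no cyclicity of $f_n$, is needed.
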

\begin{proof}
It is a well known result for abelian group extensions that the restriction of $U_T$ on any $H_{\charn}$ is spectrally pure, i.e., $\rho^{(n)}_{\max}$  is either pure point, absolutely continuous or singular continuous \cite{H:cocycle}. It then suffices to find the spectral type of a single function $f_n\in H_{\charn}$ to determine that of $\rho^{(n)}_{\max}$. From the proof of Proposition~\ref{prop:spin-diff}, we know that $f_n(x):=\charn(\pi_G(x_0))$ is in $H_{\charn}$. Since the spectral type of $\sigma_{f_n}$ is the same as the spectral type of the diffraction measure $\widehat{\gamma}^{(n)}$, the claim  follows from Proposition~\ref{prop:spin-diff}. 
\end{proof}

\begin{remark}
As a consequence, one can form functions in $L^{2}(X,\mu)$ whose spectral measures have an arbitrary (finite) number of absolutely continuous and singular continuous components. 
As an example, for the hyperlocal function $f(x)=(\chi^{ }_0+\chi^{ }_1+\chi^{ }_3+\chi^{ }_4)(\pi_{G}(x_0))$, the spectral measure $\rho_{f}$ decomposes into $\rho_f=\rho^{ }_0+\rho^{ }_1+\rho^{ }_3+\rho^{ }_4$, where $\rho^{ }_i\perp \rho^{ }_j$ for $i\neq j$ and where $\rho^{ }_0$ is pure point, $\rho^{ }_1$ and $\rho^{ }_3$ are absolutely continuous, and $\rho^{ }_4$ is singular continuous.  \exend
\end{remark}

\subsection{Non-constant length example}

Let $\A=\N_{\infty} = \N_0 \cup\{\infty\}$ denote the one-point compactification of the natural numbers and consider the substitution 
\[
\sub\colon \left\{
\begin{array}{rcl}
0      & \mapsto & 0\ 1 \\
n      & \mapsto & 0\ n\!-\!1\ n\!+\!1 \\
\infty & \mapsto & 0\ \infty\ \infty.
\end{array}\right.
\]
This is a non-constant length substitution. One can easily check that it is primitive, which implies $X_\sub$ is minimal by Theorem~\ref{thm:minimal}. It has been shown in \cite[Ex.~6.9]{MRW:compact} that the corresponding substitution operator is quasi-compact, which together with primitivity, implies that it is strongly power convergent. Note that the substitution is recognisable since every supertile begins with and never ends with $0$. It follows from Theorem~\ref{thm:unique-erg} that the associated subshift $X_\sub$ is uniquely ergodic.  Strong power convergence and primitivity also guarantee the existence of a strictly positive length function by Theorem~\ref{thm:tile-lengths}. In this case, the inflation factor is $\lambda=\frac{5}{2}$ and the length function $\ell\colon \A\to \R$ is given by $\ell(n)=2-2^{-n}$ and $\ell(\infty) = 2$. The letter frequencies are also well defined and are given by $\nu_n=2^{-(n+1)}$ and $\nu_{\infty}=0$. The existence of natural tile lengths allows one to construct a point set $\varLambda$ from the bi-infinite fixed point $w=\sub^{\infty}(\infty\,|\,0)$ given by 
\[
w=\cdots 002010\infty\infty 0\infty\infty010\infty\infty0\infty\infty|010020101013\cdots.
\] 
Now we recover a tiling $\mathcal{T}_w$ from $w$ by replacing each letter in $w$, starting from the origin, by a tile of length $\ell(n)$ if $w_z=n$. As before, we recover a coloured point set $\varLambda_w$ from $\mathcal{T}_w$ by identifying each tile with its left endpoint. The length function is bounded from above and below, with $1= \ell(0) < \ell(n) < \ell(n+1) < \ell(\infty) = 2$ for $n\in\A$.
This means that the point set $\varLambda=\text{supp}(\varLambda_w)$ is both uniformly discrete and relatively dense, and hence is a Delone set. Since there are infinitely many distinct tile lengths, $\varLambda$ has infinite local complexity and hence is not a Meyer set \cite{L:finite}.
The central portion of the associated Delone set is given in Figure~\ref{fig:pointset}.
We call $\varLambda$ a \emph{Delone set of infinite type} with inflation symmetry, i.e., $\lambda\varLambda\subseteq \varLambda$. Lagarias proved in \cite{L:finite} that, if $\varLambda$ is of finite type and has inflation symmetry, then $\lambda$ must be an algebraic integer. For the example above, $\lambda=\frac{5}{2}$ is algebraic but is not an integer. This leads to the following question:

\begin{question}
Let $\sub$ be a primitive substitution on a compact alphabet with inflation factor $\lambda$. Is it possible for $\lambda$ to be a transcendental number?
\end{question}

\begin{figure}[!h]
\begin{center}
\begin{tikzpicture}[
roundnode/.style={circle, draw=black!60, fill=gray!5, very thick,inner sep=1pt,minimum size=3pt 
}, roundnode2/.style={circle, draw=black!60, fill=gray!50, very thick,inner sep=1pt,minimum size=3pt}
]
%%Nodes
%\node[roundnode]     at (0,0)    (-8){0} ;
%\node[roundnode]     at (1,0)    (-7){1};
%\node[roundnode]     at (2.5,0)    (-6){0};
%\node[roundnode]     at (3.5,0)    (-5){$\infty$};

\draw[-,black,dotted, line width= 1.0pt](4.4,0)--(5.2,0);

\draw[-,black, line width= 1.5pt](5.73,0)--(7.3,0);
\draw[-,black, line width= 1.5pt](7.7,0)--(8.27,0);
\draw[-,black, line width= 1.5pt](8.73,0)--(10.27,0);
\draw[-,black, line width= 1.5pt](10.74,0)--(12.3,0);
\draw[-,black, line width= 1.5pt](12.7,0)--(13.3,0);
\draw[-,black, line width= 1.5pt](13.7,0)--(14.8,0);
\draw[-,black, line width= 1.5pt](15.2,0)--(15.8,0);
\draw[-,black, line width= 1.5pt](16.2,0)--(16.8,0);

\draw[-,black,dotted, line width= 1.0pt](17.3,0)--(18.0,0);

\node[roundnode]     at (5.5,0)    (-4){$\infty$};
\node[roundnode]     at (7.5,0)    (-3){$0$};

\node[roundnode]     at (8.5,0)    (-2){$\infty$};
\node[roundnode]     at (10.5,0)    (-1){$\infty$};
\node[roundnode2]     at (12.5,0)    (0){$0$};
\node[roundnode]     at (13.5,0)    (1){$1$};
\node[roundnode]     at (15.0,0)    (2){$0$};
\node[roundnode]     at (16.0,0)    (3){$0$};
\node[roundnode]     at (17.0,0)    (4){$2$};

\end{tikzpicture}
\end{center}
\caption{The central patch of the coloured point set $\varLambda_w$  derived from $w$. The location of the origin is signified by the shaded circle. The support of this coloured point set satisfies $\lambda\varLambda\subset \varLambda$ with $\lambda=\frac{5}{2}$.}
\label{fig:pointset}
\end{figure}
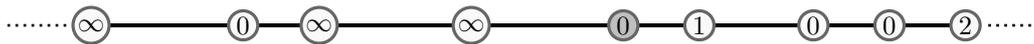

For primitive substitutions on finite alphabets, it is well known that the diffraction measure of a weighted Dirac comb supported on a Delone set $\varLambda$ arising from $\sub$ has non-trivial pure point component if and only if $\lambda$ is Pisot \cite{GK:diff}. Moreover, if $\widehat{\gamma}$ is pure point, $\varLambda$ must be a Meyer set \cite{LS:pp-Meyer}. 
Note however that a general Delone set need not be Meyer for it to have a non-trivial pure point component; see \cite{KS:Meyer,PFS:fusion-ILC} for the ``scrambled Fibonacci'' example, which arises from a fusion rule with finite local complexity.

Most proofs of existence of non-trivial pure point component (Bragg peaks in the diffraction setting, eigenvalues in the dynamical setting) for point sets with some form of hierarchical structure rely on the Diophantine properties of the return vectors, which generate the translation module. When the Delone set is of finite type, i.e., there are only finitely many distinct tile lengths, this $\Z$-module is finitely generated. For the example above, the translation module is infinitely generated and it is not clear whether the criteria for the existence of eigenvalues extend to this setting. This will be tackled in future work.

\section*{Acknowledgements}

\noindent
The authors would like to thank Michael Baake, Michael Coons, Natalie Priebe Frank, Franz G\"ahler, Christoph Richard and Nicolae Strungaru for fruitful discussions.

\bibliography{tilings}

\providecommand{\bysame}{\leavevmode\hbox to3em{\hrulefill}\thinspace}
\providecommand{\MR}{\relax\ifhmode\unskip\space\fi MR }
% \MRhref is called by the amsart/book/proc definition of \MR.
\providecommand{\MRhref}[2]{%
  \href{http://www.ams.org/mathscinet-getitem?mr=#1}{#2}
}
\providecommand{\href}[2]{#2}
\begin{thebibliography}{10}

\bibitem{AL:chain}
I.~Abou and P.~Liardet, \emph{Flots cha\^{\i}n\'{e}s}, Proceedings of the
  {S}ixth {C}ongress of {R}omanian {M}athematicians. {V}ol. 1, Ed. Acad.
  Rom\^{a}ne, Bucharest, 2009, pp.~401--432.

\bibitem{AS:regular}
J.-P. Allouche and J.~Shallit, \emph{The ring of {$k$}-regular sequences.
  {II}}, Theoret. Comput. Sci. \textbf{307} (2003), no.~1, 3--29.

\bibitem{Baake}
M.~Baake, \emph{Diffraction of weighted lattice subsets}, Canad. Math. Bull.
  \textbf{45} (2002), no.~4, 483--498.

\bibitem{BG:book}
M.~Baake and U.~Grimm, \emph{Aperiodic {O}rder. {V}olume 1: {A} {M}athematical
  {I}nvitation}, Cambridge Univ. Press, 2013.

\bibitem{BG-squiral}
\bysame, \emph{Squirals and beyond: substitution tilings with singular
  continuous spectrum}, Ergodic Theory Dynam. Systems \textbf{34} (2014),
  no.~4, 1077--1102.

\bibitem{BaakeLenz}
M.~Baake and D.~Lenz, \emph{Dynamical systems on translation bounded measures:
  pure point dynamical and diffraction spectra}, Ergodic Theory Dynam. Systems
  \textbf{24} (2004), no.~6, 1867--1893.

\bibitem{BLvE}
M.~Baake, D.~Lenz, and A.~van Enter, \emph{Dynamical versus diffraction
  spectrum for structures with finite local complexity}, Ergodic Theory Dynam.
  Systems \textbf{35} (2015), no.~7, 2017--2043.

\bibitem{BaakeMoody}
M.~Baake and R.~V. Moody, \emph{Weighted {D}irac combs with pure point
  diffraction}, J. Reine Angew. Math. \textbf{573} (2004), 61--94.

\bibitem{Bartlett}
A.~Bartlett, \emph{Spectral theory of {$\mathbb{Z}^d$} substitutions}, Ergodic
  Theory Dynam. Systems \textbf{38} (2018), no.~4, 1289--1341.

\bibitem{CKM:q-mult}
J.~Coquet, T.~Kamae, and M.~Mend\`es~France, \emph{Sur la mesure spectrale de
  certaines suites arithm\'{e}tiques}, Bull. Soc. Math. France \textbf{105}
  (1977), no.~4, 369--384.

\bibitem{Dekking}
F.~M. Dekking, \emph{The spectrum of dynamical systems arising from
  substitutions of constant length}, Z. Wahrscheinlichkeitstheorie und Verw.
  Gebiete \textbf{41} (1977/78), 221--239.

\bibitem{DOP:self-induced}
F.~Durand, N.~Ormes, and S.~Petite, \emph{Self-induced systems}, J. Anal. Math.
  \textbf{135} (2018), no.~2, 725--756.

\bibitem{EG:almost-minimal}
B.~Eichinger and P.~Gohlke, \emph{Spectral properties of {S}chr\"{o}dinger
  operators associated with almost minimal substitution systems}, Ann. Henri
  Poincar\'{e} \textbf{22} (2021), no.~5, 1377--1427.

\bibitem{F:infinite}
S.~Ferenczi, \emph{Substitution dynamical systems on infinite alphabets}, Ann.
  Inst. Fourier (Grenoble) \textbf{56} (2006), no.~7, 2315--2343.

\bibitem{Frank-HD}
N.~P. Frank, \emph{Multidimensional constant-length substitution sequences},
  Topology Appl. \textbf{152} (2005), no.~1-2, 44--69.

\bibitem{FM:spin}
N.~P. Frank and N.~Ma\~nibo, \emph{Spectral theory of spin substitutions},
  Discrete Contin. Dynam. Syst. \textbf{42} (2022), no.~11, 5399--5435.

\bibitem{PFS:fusion-ILC}
N.~P. Frank and L.~Sadun, \emph{Fusion tilings with infinite local complexity},
  Topology Proc. \textbf{43} (2014), 235--276.

\bibitem{GK:diff}
F.~G\"{a}hler and R.~Klitzing, \emph{The diffraction pattern of self-similar
  tilings}, The mathematics of long-range aperiodic order (R.V. Moody, ed.),
  NATO Adv. Sci. Inst. Ser. C Math. Phys. Sci., vol. 489, Kluwer Acad. Publ.,
  Dordrecht, 1997, pp.~141--174.

\bibitem{H:cocycle}
H.~Helson, \emph{Cocycles on the circle}, J. Operator Theory \textbf{16}
  (1986), no.~1, 189--199.

\bibitem{KS:Meyer}
J.~Kellendonk and L.~Sadun, \emph{Meyer sets, topological eigenvalues, and
  {C}antor fiber bundles}, J. Lond. Math. Soc. \textbf{89} (2014), no.~1,
  114--130.

\bibitem{KY:Ellis-bij}
J.~Kellendonk and R.~Yassawi, \emph{The {E}llis semigroup of bijective
  substitutions}, Groups Geom. Dyn. \textbf{16} (2022), no.~1, 29--73.

\bibitem{L:finite}
J.~C. Lagarias, \emph{Geometric models for quasicrystals {I}. {D}elone sets of
  finite type}, Discrete Comput. Geom. \textbf{21} (1999), no.~2, 161--191.

\bibitem{LMS:pp}
J.-Y. Lee, R.~V. Moody, and B.~Solomyak, \emph{Pure point dynamical and
  diffraction spectra}, Ann. Henri Poincar\'{e} \textbf{3} (2002), no.~5,
  1003--1018.

\bibitem{LS:pp-Meyer}
J.-Y. Lee and B.~Solomyak, \emph{Pure point diffractive substitution {D}elone
  sets have the {M}eyer property}, Discrete Comput. Geom. \textbf{39} (2008),
  no.~1-3, 319--338.

\bibitem{L:sampling}
D.~Lenz, \emph{Spectral theory of dynamical systems as diffraction theory of
  sampling functions}, Monatsh. Math. \textbf{192} (2020), no.~3, 625--649.

\bibitem{LSS:mean-ap}
D.~Lenz, T.~Spindeler, and N.~Strungaru, \emph{Pure point diffraction and mean,
  {B}esicovitch and {W}eyl almost periodicity}, preprint,
  \texttt{arxiv:2006.10821} (2020).

\bibitem{MRW:compact}
N.~Ma\~nibo, D.~Rust, and J.~J. Walton, \emph{Substitutions on compact
  alphabets}, preprint, \texttt{arXiv:2204.07516} (2022).

\bibitem{Queffelec}
M.~Queff\'{e}lec, \emph{Substitution {D}ynamical {S}ystems---{S}pectral
  {A}nalysis}, second ed., LNM, vol. 1294, Springer, Berlin, 2010.

\bibitem{RY:profinite}
E.~Rowland and R.~Yassawi, \emph{Profinite automata}, Adv. in Appl. Math.
  \textbf{85} (2017), 60--83.

\bibitem{Strungaru}
N.~Strungaru, \emph{On the {F}ourier analysis of measures with {M}eyer set
  support}, J. Funct. Anal. \textbf{278} (2020), 108404, (30~pp.).

\bibitem{W:Toeplitz}
S.~Williams, \emph{Toeplitz minimal flows which are not uniquely ergodic}, Z.
  Wahrsch. Verw. Gebiete \textbf{67} (1984), no.~1, 95--107.

\end{thebibliography}
\bibliographystyle{amsplain}

\end{document}